\def\1{\mathbbmss{1}}
\def\X{\mathcal{X}}
\def\Y{\mathcal{Y}}
\def\A{\mathcal{A}}
\def\N{\mathcal{N}}
\def\L{\mathcal{L}}
\def\R{\mathcal{R}}
\def\H{\mathcal{H}}
\def\sfstp{{\hskip-1em}{\bf.}{\hskip.5em}}
\newtheorem{theorem}{Theorem}[section]
\newtheorem{lemma}[theorem]{Lemma}
\newtheorem{proposition}[theorem]{Proposition}
\newtheorem{corollary}[theorem]{Corollary}
\theoremstyle{definition}
\newtheorem{definition}[theorem]{Definition}
\theoremstyle{remark}
\newtheorem{remark}[theorem]{Remark}
\numberwithin{equation}{section}
\begin{document}
\setcounter{page}{1}

\title[CO-EP BANACH ALGEBRA ELEMENTS]{CO-EP BANACH ALGEBRA ELEMENTS}

\author[J. BEN\'ITEZ, E. BOASSO, V. RAKO\v CEVI\'C]{JULIO BEN\' ITEZ, ENRICO BOASSO AND VLADIMIR RAKO\v CEVI\'C}

\subjclass[2010]{Primary 46H05; Secondary 47A05}

\keywords{Banach algebra, hermitian Banach algebra element, Moore-Penrose inverse, co-EP 
Banach algebra element, Banach space operator.}

\begin{abstract}In this work, given a unital Banach algebra 
$\A$ and  $a\in \A$ such that
$a$ has a Moore-Penrose inverse $a^\dagger$, it will be characterized when $aa^\dagger-
a^\dagger a$ is invertible. A particular subset of this class of objects will also be
studied. In addition, perturbations of this class of elements will be  studied.
Finally, the Banach space operator case will be also considered.

\end{abstract} \maketitle

\section{Introduction and preliminaries}

A square matrix $A$ is said to be EP if 
$\N(A)=\N(A^*)$. This notion was introduced
in \cite{S} and since then many authors have studied EP matrices.  Since a necessary and sufficient condition  for a matrix $A$ to be EP is the fact that 
$A$ commutes with its Moore-Penrose inverse, the notion under consideration
has been extended to Hilbert space operators and
$C^*$-algebra elements, see for example  \cite{CM, Br, HM1, HM2, K1, DD, DK, Ben}.\par

\indent In the context of Banach algebras the notion of Moore-Penrose inverse was
introduced by V. Rako\v cevi\' c in \cite{R1} and its basic
properties were studied in \cite{R1, R2, M, Bo}.
In the recent past EP Banach space operators and EP Banach algebra elements,
i.e., Moore-Penrose invertible operators or elements of
a Banach algebra such that they commute with their Moore-Penrose
inverse, were introduced and characterized, see \cite{Bo, Bo2, Bo1, MD1}. \par

\indent The main object of this work is to study a complementary class of object, the
co-EP Banach algebra elements, i.e., the Moore-Penrose invertible elements $a\in \A$ such that 
$aa^\dagger-a^\dagger  a$ is nonsingular, where $\A$ is a Banach algebra, $a\in \A$ and $a^\dagger$ denotes the Moore-Penrose inverse of $a$.
This class of objects were studied for matrices (\cite{BR1, BR3}), for $C^*$-algebras (\cite{BR2}) and for  rings (\cite{BLR}). In section 2, 
co-EP Banach algebra elements  will be characterized. In addition, it will be also given necessary
and sufficient conditions that ensure the nonsigularity of $aa^\dagger+a^\dagger a$, $a\in \A$, $\A$ a unital Banach algebra.
Moreover, co-EP Banach space operators defined on finite and infinite dimensional Banach space will be also studied. 
On the other hand, in section 3 a particular set of co-EP Banach algebra elements will be characterized and in section 4 perturbations of
co-EP elements will be considered.

\indent It is worth noticing that since $aa^\dagger$ and $a^\dagger  a$ are idempotents, characterizing co-EP elements is related to the following problem: given 
two idempotents $p$ and $q$,  find conditions that ensure
the nonsingularity of $p-q$. This problem was studied in different frames and by many authors, see for example \cite{V2, Pt, Bu1, Bu2, KR2, KR, KR5, KRS}. 

\indent It is also important to observe that due to the lack of an involution on
a Banach algebra, and in particular on the Banach algebra of
bounded and linear maps defined on a Banach space, the results presented 
in this work give a new
insight into the cases where the involution does exist. In fact,
the results and proofs presented do not depend on a particular norm, the Euclidean norn,
but they hold for any norm. In particular, the results considered in this work also apply
to co-EP matrices defined using an arbitrary norm on a
finite dimensional vector space.\par

\indent From now on  $\X$ will denote a complex Banach space and $\L(\X, \Y)$ the
Banach algebra of all bounded and linear maps defined on $\X$ with values
in the Banach space $\Y$. As usual, when $\X=\Y$, $\L(\X,\Y)$ will be
denoted by $\L(\X)$. Let $I$ denote the identity map of $\L(\X)$.
In addition, if $T\in \L(\X,\Y)$, then $\N(T)\subseteq \X$ and
$\R(T)\subseteq \Y$ will stand for the null space and the
range of $T$ respectively. \par

\indent On the other hand, $\A$ will denote a unital complex Banach algebra with unit $1$
and $\A^{-1}$ will stand for the set of all invertible elements of $\A$.
If $a\in \A$, then $L_a \colon \A\to \A$
and $R_a\colon \A\to \A$ will denote the map defined by
left and right multiplication respectively:
$$
L_a(x)=ax, \hskip2truecm R_a(x)=xa, \hskip2truecm (x\in \A).
$$
Moreover, the following notation will be used:
$a^{-1}(0)=\N(L_a)$, $a\A=\R(L_a)$, $a_{-1}(0)=\N(R_a)$ and
$\A a= \R(R_a)$. Observe that $a\in \A^{-1}$ if and only if $L_a\in \L(\A)^{-1}$.\par

\indent Recall that an element $a\in \A$ is called \it{regular}, \rm
if it has a \it{generalized inverse}, \rm namely if there exists $b\in \A$ such that
$$
a=aba.
$$

\noindent Furthermore, a generalized inverse $b$ of a regular
element $a\in \A$ will be called \it{normalized}, \rm if $b$ is regular
and $a$ is a generalized inverse of $b$, equivalently,
$$
a=aba, \hskip2truecm b=bab.
$$

\noindent Note that if $b$ is a generalized inverse of $a$,
then $bab$ is a normalized generalized inverse
of $a$. What is more, when $b\in \A$ is a normalized generalized inverse
of $a\in \A$, $ab$ and $ba$ are idempotents and the following identities hold:\par
\renewcommand\arraystretch{1.5}
\begin{equation*} 
\begin{array}{ccccccc}
ab\A=a\A, & \hspace{.5cm} & (ab)^{-1}(0)=b^{-1}(0), & \hspace{.5cm} & \A ab=\A b,
& \hspace{.5cm} & (ab)_{-1}(0)=a_{-1}(0), \\
ba\A=b\A, & \hspace{.5cm} & (ba)^{-1}(0)=a^{-1}(0), & \hspace{.5cm} & \A ba=\A a,
& \hspace{.5cm} & (ba)_{-1}(0)=b_{-1}(0).
\end{array}
\end{equation*}

\noindent Recall also that
\begin{equation*} 
\begin{array}{ccc}
(1-ab)\A=(ab)^{-1}(0), & \hspace{.5cm} & \A(1-ab)=(ab)_{-1}(0), \\
(1-ba)\A=(ba)^{-1}(0), & \hspace{.5cm} & \A(1-ba)=(ba)_{-1}(0).
\end{array}
\end{equation*}

\indent Next follows the key notion in the definition of the
Moore-Penrose inverse in the context of Banach algebras.\par

\begin{definition}\label{Def1}
Given a unital Banach algebra $\A$, an element $a\in \A$ is said to be hermitian,
if $\parallel \exp(ita)\parallel =1$,  for all $ t\in \mathbb{R}$.
\end{definition}

\indent As regard equivalent definitions and the main properties of hermitian Banach
algebra elements and Banach space operators, see \cite{BD, Do, Lu, Pa, V}.
In the conditions of the previous
definition, recall that if $\A$ is a $C^*$-algebra, then
$a\in \A$ is hermitian if and only if $a$ is self-adjoint,
see \cite[Proposition 20, Section 12, Chapter I]{BD}.
Furthermore, $\H=\{a\in \A\colon \hbox{  }a \hbox{ is
hermitian}\}\subseteq \A$ is a closed linear vector space over the real field
(\cite{V}, \cite[Theorem 4.4, Chapter 4]{Do}). Since $\A$ is unital, $1 \in \H$,
which implies that $a\in \H$ if and only if $1 - a\in \H$.\par

\indent Now the definition of Moore-Penrose invertible Banach algebra elements
will be recalled.\par

\begin{definition} \label{def2}Let $\A$ be a unital Banach algebra and consider $a\in \A$. If there exists
$x\in \A$ such that $x$ is a normalized generalized inverse of $a$ satisfying that
$ax$ and $xa$ are hermitian, then $x$ will be called the {\em Moore-Penrose inverse} of $a$,
and it will be
denoted by $a^{\dag}$.
\end{definition}

\indent Recall that according to \cite[Lemma 2.1]{R1}, given $a\in \A$, there is at most
one $x\in \A$ satifying the conditions of Definition \ref{def2}. In addition, recall that
even for matrices with an arbitrary norm, the Moore-Penrose inverse could not exist
(see \cite[Remark 4]{Bo}).
Let $\A^{\dag}$ denote the set of all
Moore-Penrose invertible elements of $\A$. As regard the Moore-Penrose inverse in Banach algebra,
see \cite{Bo, M, R1, R2}. For the original definition of the Moore-Penrose
inverse for matrices, see \cite{Pe}.\par

\indent Given $a\in \A$ such that $a^{\dag}$ exists, $a\in \A$ is said to be {\em EP},
if $aa^{\dag}=a^{\dag}a$. EP elements in Banach algebras have been considered in
\cite{Bo, Bo2, Bo1, BDM, MD1}. Next the objects that will be studied in this article will be introduced.\par

\begin{definition} \label{def3} Let $\A$ be a unital Banach algebra and $a\in \A^{\dag}$.
The element $a$ will be said to be {\em co-EP}, if $aa^{\dag}-a^{\dag}a\in \A^{-1}$.
\end{definition}

\indent Given a unital Banach algebra $\A$, the set of all co-EP elements
of $\A$ will be denoted by $\A_{co}^{EP}$. Co-EP elements have been studied in \cite{BR1, BR2, BR3}.\par

\section {\sfstp Co-EP elements}

\noindent In this section co-EP Banach algebra elements and Banach space operators
will be characterized. Co-EP matrices were studied in \cite{BR1, BR3} and co-EP $C^*$-algebra elements
in \cite{BR2}. However, in order to characterize co-EP elements, some preparation is needed.\par


\begin{proposition}\label{pro8}
Let $\A$ be a unital Banach algebra, $a\in \A^{\dag}$, and $\lambda, \mu \in \mathbb{C}
\setminus \{0\}$. The following statements hold.
\begin{itemize}
\item[{\rm (i)}] $L_{\lambda a + \mu a^{\dag}} ((1-aa^{\dag})\A ) =
    L_a ((1-aa^{\dag}) \A )\subseteq aa^{\dag}\A$.
\item[{\rm (ii)}] $L_{\lambda a + \mu a^{\dag}} ((1-a^{\dag}a)\A ) =
    L_{a^{\dag}} ((1-a^{\dag}a)\A )\subseteq a^{\dag}a\A$.
\item[{\rm (iii)}] The following statements are equivalent:
    \begin{itemize}
    \item[{\rm (iii.a)}] $(1-aa^{\dag})\A\cap (1-a^{\dag}a)\A=0$;
    \item[{\rm (iii.b)}]
        $\N(L_{\lambda a + \mu a^{\dag}}) \cap (1 - aa^\dag ) \A =0$;
    \item[{\rm (iii.c)}]
        $\N(L_{\lambda a + \mu a^{\dag}}) \cap (1 - a^\dag a) \A = 0$.
    \end{itemize}
\item[{\rm (iv)}] If $a^{\dag} \in (1-aa^{\dag})\A+  (1-a^{\dag}a)\A$,
    then $L_{\lambda a + \mu a^{\dag}}((1 - aa^\dag)\A ) =
    L_a((1 - aa^\dag)\A) = aa^{\dag}\A$.\par
\item[{\rm (v)}] If $a \in (1-aa^{\dag})\A + (1-a^{\dag}a)\A$,
    then $L_{\lambda a + \mu a^{\dag}}(( 1 - a^\dag a) \A ) =
    L_{a^{\dag}}(( 1 - a^\dag a) \A ) = a^{\dag}a\A$.
\end{itemize}
\end{proposition}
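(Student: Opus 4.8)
The plan is to work throughout with the hermitian idempotents $p:=aa^{\dag}$ and $q:=a^{\dag}a$ and with the element $c:=\lambda a+\mu a^{\dag}$, reducing everything to a few elementary identities. Since $a^{\dag}$ is a normalized generalized inverse of $a$, we have $a=aa^{\dag}a$ and $a^{\dag}=a^{\dag}aa^{\dag}$, so $aq=a$ and $a^{\dag}p=a^{\dag}$; that is, $a(1-q)=0$ and $a^{\dag}(1-p)=0$. Moreover, by the identities recalled in the introduction, $p\A=aa^{\dag}\A=a\A$ and $q\A=a^{\dag}a\A=a^{\dag}\A$, while $(1-p)\A=(aa^{\dag})^{-1}(0)=(a^{\dag})^{-1}(0)=\N(L_{a^{\dag}})$ and $(1-q)\A=(a^{\dag}a)^{-1}(0)=a^{-1}(0)=\N(L_a)$. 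In particular, $x\in(1-p)\A$ if and only if $a^{\dag}x=0$, and $x\in(1-q)\A$ if and only if $ax=0$.

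For (i) and (ii) the key point is that $c$ collapses on each of these two sets. If $x\in(1-p)\A$, then $a^{\dag}x=0$, hence $cx=\lambda ax$; letting $x$ range over $(1-p)\A$ and using $\lambda\ne0$ gives $L_c((1-p)\A)=L_a((1-p)\A)=a(1-p)\A\subseteq a\A=aa^{\dag}\A$, which is (i). Symmetrically, if $x\in(1-q)\A$, then $ax=0$, hence $cx=\mu a^{\dag}x$, and $\mu\ne0$ yields $L_c((1-q)\A)=L_{a^{\dag}}((1-q)\A)=a^{\dag}(1-q)\A\subseteq a^{\dag}\A=a^{\dag}a\A$, which is (ii).

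For (iii) I would combine this collapsing with the kernel descriptions from the first paragraph. For $x\in(1-p)\A$ one has $cx=\lambda ax$, so $\N(L_c)\cap(1-p)\A$ consists precisely of the $x\in(1-p)\A$ with $ax=0$; thus $\N(L_c)\cap(1-p)\A=\N(L_a)\cap(1-p)\A=(1-q)\A\cap(1-p)\A$, which immediately gives (iii.a)$\Leftrightarrow$(iii.b). Likewise, for $x\in(1-q)\A$ one has $cx=\mu a^{\dag}x$, so $\N(L_c)\cap(1-q)\A=\N(L_{a^{\dag}})\cap(1-q)\A=(1-p)\A\cap(1-q)\A$, which gives (iii.a)$\Leftrightarrow$(iii.c).

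Finally, (iv) and (v) just add the reverse inclusions to (i) and (ii). For (iv), by (i) it is enough to show $aa^{\dag}\A\subseteq L_a((1-p)\A)$. Write the hypothesis as $a^{\dag}=(1-p)u+(1-q)v$ with $u,v\in\A$ and multiply on the left by $a$; using $a(1-q)=0$ this gives $p=aa^{\dag}=a(1-p)u$, hence $pw=a(1-p)(uw)\in a(1-p)\A$ for every $w\in\A$, so $aa^{\dag}\A=p\A\subseteq a(1-p)\A=L_a((1-p)\A)$; combined with (i) this forces $L_c((1-p)\A)=L_a((1-p)\A)=aa^{\dag}\A$. Part (v) is the mirror argument: from $a=(1-p)u+(1-q)v$ and $a^{\dag}(1-p)=0$ one obtains $q=a^{\dag}a=a^{\dag}(1-q)v$, so $a^{\dag}a\A=q\A\subseteq a^{\dag}(1-q)\A=L_{a^{\dag}}((1-q)\A)$, and (ii) closes the chain to $L_c((1-q)\A)=L_{a^{\dag}}((1-q)\A)=a^{\dag}a\A$. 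I do not expect any genuine obstacle here; the proof is essentially bookkeeping, and the only thing requiring care is keeping straight which of the one-sided identities $a(1-q)=0$ and $a^{\dag}(1-p)=0$ is invoked at each step, together with the identifications $p\A=a\A$ and $q\A=a^{\dag}\A$.
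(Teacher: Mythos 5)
Your proof is correct and follows essentially the same route as the paper: the same collapsing identities $cx=\lambda ax$ on $(1-aa^\dag)\A$ and $cx=\mu a^\dag x$ on $(1-a^\dag a)\A$, the same kernel identifications $(1-aa^\dag)\A=(a^\dag)^{-1}(0)$, $(1-a^\dag a)\A=a^{-1}(0)$ for (iii), and the same left-multiplication of the hypothesis by $a$ (resp.\ $a^\dag$) for (iv) and (v). The only difference is organizational (your set equality $\N(L_c)\cap(1-p)\A=(1-p)\A\cap(1-q)\A$ packages the two implications of (iii) at once), not mathematical.
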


\begin{proof}
(i). Let $x\in \A$. The equalities 
$(\lambda a + \mu a^{\dag})(1 - aa^{\dag})x =
\lambda a(1 - aa^{\dag})x$ and 
$a(1 - aa^{\dag})x = 
aa^\dag a(1 - aa^{\dag})x$ prove (i).

(ii). Apply an argument similar to the one used in the proof of statement (i).\par

(iii.a) $\Rightarrow$ (iii.b) follows from $(1 - aa^\dag)\A = (aa^\dag)^{-1}(0) =
(a^\dag)^{-1}(0)$ and $(1 -  a^\dag a)\A = (a^\dag a)^{-1}(0) = a^{-1}(0)$.

(iii.b) $\Rightarrow$ (iii.a). Let $x \in (1-aa^{\dag}) \A\cap (1-a^{\dag}a)\A$.
Since $x \in a^{-1}(0) \cap (a^\dag)^{-1}(0)$,
$x \in \N(L_{\lambda a + \mu a^{\dag}}) \cap (1 - aa^\dag ) \A =0$.

The equivalence (iii.a) $\Leftrightarrow$ (iii.c) can be proved in a similar way.

(iv). Let $c$, $d\in \A$ such that  $a^{\dag}=(1-aa^{\dag})c+(1-a^{\dag}a)d$.
In particular, if $x\in \A$, then $aa^{\dag}x=a(1-aa^{\dag})cx
= (\lambda a + \mu a^\dag)(1-aa^\dag )c (\lambda^{-1} x)$.\par

(v). Apply an argument similar to the one used in the proof of statement (iv).
\end{proof}

In the following theorem, co-EP Banach algebra elements will be characterized.
Note that this theorem is stronger than \cite[Theorem 2]{BLR}.\par

\begin{theorem}\label{thm7}
Let $\A$ be a unital Banach algebra, $a\in \A^{\dag}$ and $\lambda, \mu \in \mathbb{C}
\setminus \{0 \}$.
The following statements are equivalent.
\begin{itemize}
\item[{\rm (i)}] $aa^{\dag}-a^{\dag}a\in \A^{-1}$;
\item[{\rm (ii)}] $\A=a\A\oplus  a^{\dag}\A$ and  $\A=\A a\oplus \A a^{\dag}$;

\item[{\rm (iii)}] $\lambda a + \mu a^{\dag}\in \A^{-1}$ and $a\A\cap a^{\dag}\A=0$;
\item[{\rm (iv)}] $\lambda a + \mu a^{\dag}\in \A^{-1}$ and exists
	and idempotent $h$ such that $ha=a$, $ha^\dag = 0$;
\item[{\rm (v)}] $L_{\lambda a + \mu a^{\dag}}$, 
	$R_{\lambda a + \mu a^{\dag}}\in \L(\A )$ are right invertible and
    $a\A\cap a^{\dag}\A =0$;
\item[{\rm (vi)}] $aa^{\dag}+a^{\dag}a\in \A^{-1}$ and $a\A  \cap  a^{\dag}\A=0$;
\item[{\rm (vii)}] $\lambda a + \mu a^{\dag}\in \A^{-1}$ and exists
	and idempotent $k$ such that $ak=a$, $a^\dag k= 0$;
\item[{\rm (viii)}] $aa^{\dag}+a^{\dag}a\in \A^{-1}$ and $\A a\cap \A a^{\dag}=0$;
\item[{\rm (ix)}] $\lambda a + \mu a^{\dag}\in \A^{-1}$ and $\A a\cap \A a^{\dag}=0$.

\end{itemize}
\end{theorem}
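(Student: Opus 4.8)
The plan is to pass to the hermitian idempotents $p=aa^{\dag}$ and $q=a^{\dag}a$. Using the identities for normalized generalized inverses recalled in the preliminaries one has $a\A=p\A$, $a^{\dag}\A=q\A$, $\A a=\A q$, $\A a^{\dag}=\A p$, together with $(1-p)\A=(a^{\dag})^{-1}(0)$, $(1-q)\A=a^{-1}(0)$, $\A(1-p)=a_{-1}(0)$, $\A(1-q)=(a^{\dag})_{-1}(0)$, so that all nine conditions become statements about $p,q$. Three facts about idempotents do the book-keeping. (1) For idempotents $r,s$ one always has $(1-r)\A\cap(1-s)\A\subseteq\ker L_{r-s}\cap\ker L_{r+s}$, with equality when $r\A\cap s\A=0$ (if $(r\pm s)x=0$ then $rx=\mp sx\in r\A\cap s\A=0$), and the right-multiplication version holds too. (2) If $\A=r\A\oplus s\A$, the projection onto $r\A$ along $s\A$ commutes with every $R_c$, hence equals $L_e$ for an idempotent $e$ with $e\A=r\A$, $(1-e)\A=s\A$; reading off $er=r$, $re=e$, $(1-e)s=s$, $s(1-e)=1-e$ gives $1-e\in\A(1-r)$, $e\in\A(1-s)$, so $\A=\A(1-r)+\A(1-s)$, which with (1) yields $\A=\A(1-r)\oplus\A(1-s)$; symmetrically $\A=\A r\oplus\A s$ forces $\A=(1-r)\A\oplus(1-s)\A$. (3) The identity $(p-q)^{2}=(p+q)(2-p-q)=(2-p-q)(p+q)$, whose factors commute, shows $p-q\in\A^{-1}$ iff both $p+q\in\A^{-1}$ and $2-p-q\in\A^{-1}$. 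The decisive extra ingredient is that $p,q,1-p,1-q$ lie in the real subspace $\H$, hence so do $p\pm q$ and $2-p-q$; and for a hermitian $t$ the spectrum is real, so $\sigma(t)$ has empty interior, $\partial\sigma(t)=\sigma(t)$, and therefore $\sigma(t)=\partial\sigma(t)\subseteq\sigma_{l}(t)\subseteq\sigma(t)$ — a hermitian element is invertible as soon as it is left invertible (equivalently right invertible).

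First I would prove $(\mathrm{i})\Rightarrow$ everything. From $p-q\in\A^{-1}$ one writes $1=(p-q)^{-1}(p-q)$ and $1=(p-q)(p-q)^{-1}$ and separates the $p$- and $q$-parts, obtaining $\A=p\A\oplus q\A$ and $\A=\A p\oplus\A q$ (injectivity of $L_{p-q}$, $R_{p-q}$ kills the intersections), i.e.\ (ii); by (2) one also gets $\A=\A(1-p)\oplus\A(1-q)$ and $\A=(1-p)\A\oplus(1-q)\A$, and since $q-p=-(p-q)\in\A^{-1}$ the same reasoning for $(1-p,1-q)$ reproduces $\A=(1-p)\A\oplus(1-q)\A$ and $\A=\A(1-p)\oplus\A(1-q)$. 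In particular $a,a^{\dag}\in(1-p)\A+(1-q)\A$, so Proposition~\ref{pro8}(iv),(v) give $L_{\lambda a+\mu a^{\dag}}\big((1-p)\A\big)=p\A$ and $L_{\lambda a+\mu a^{\dag}}\big((1-q)\A\big)=q\A$, whence $L_{\lambda a+\mu a^{\dag}}$ carries $\A=(1-p)\A+(1-q)\A$ onto $p\A+q\A=\A$; its kernel is $(1-p)\A\cap(1-q)\A=0$ by (1). Thus $\lambda a+\mu a^{\dag}\in\A^{-1}$, which is (iii) and (v), and it gives (iv), (vii) with $h,k$ the idempotents for which $L_h$, $R_k$ are the projections $\A\to a\A$ along $a^{\dag}\A$ and $\A\to\A a$ along $\A a^{\dag}$ (so $ha=a$, $ha^{\dag}=0$, $ak=a$, $a^{\dag}k=0$); and $p+q\in\A^{-1}$ by (3), giving (vi), (viii).

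For the converses the key observation is that, by the hermitian remark above, it suffices to exhibit $aa^{\dag}-a^{\dag}a$ — or, in the cases (vi), (viii), the element $2-p-q$, which combined with the hypothesis $p+q\in\A^{-1}$ and (3) gives $p-q\in\A^{-1}$ — as a \emph{one-sidedly} invertible element. From any of (iii), (iv), (v), (vii), (ix) one first extracts the intersection hypothesis $a\A\cap a^{\dag}\A=0$, resp.\ $\A a\cap\A a^{\dag}=0$ (immediate from the idempotent in (iv), (vii), explicit in (v), present in (iii), (ix)), and couples it with the ``sum'' equality $p\A+q\A=\A$, resp.\ $\A p+\A q=\A$, coming from surjectivity of $L_{\lambda a+\mu a^{\dag}}$, resp.\ $R_{\lambda a+\mu a^{\dag}}$, to get $\A=p\A\oplus q\A$, resp.\ $\A=\A p\oplus\A q$. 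Taking the associated projection $L_h$, resp.\ $R_k$, as in (2) and computing $hp=p$, $hq=0$, resp.\ $pk=0$, $qk=q$, yields $p=h(p-q)$, $q=(h-1)(p-q)$, resp.\ $p=(p-q)(1-k)$, $q=-(p-q)k$, so $p,q\in\A(p-q)$, resp.\ $p,q\in(p-q)\A$; since $\lambda a+\mu a^{\dag}\in\A^{-1}$ makes the ambient sum equal $\A$ this forces $\A(p-q)=\A$, resp.\ $(p-q)\A=\A$, i.e.\ $p-q$ is left, resp.\ right, invertible, hence — being hermitian — invertible, which is (i). For (ii): (2) upgrades its two direct sums to $\A=(1-p)\A\oplus(1-q)\A$ and $\A=\A(1-p)\oplus\A(1-q)$, so $a,a^{\dag}\in(1-p)\A+(1-q)\A$, and Proposition~\ref{pro8}(iv),(v) with (1) give $\lambda a+\mu a^{\dag}\in\A^{-1}$ exactly as before, so $(\mathrm{ii})\Rightarrow(\mathrm{iii})$. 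For (vi), (viii): $p+q\in\A^{-1}$ already yields $p\A+q\A=\A$, $\A p+\A q=\A$ and, via (1), $(1-p)\A\cap(1-q)\A=0$, $\A(1-p)\cap\A(1-q)=0$; with the intersection hypothesis one assembles $\A=p\A\oplus q\A$ or $\A=\A p\oplus\A q$, hence by (2) the complementary direct sum, and a short computation with the corresponding projection shows $2-p-q$ to be one-sidedly, hence (hermitian) invertible; (3) finishes.

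The step I expect to cost the most is the one just described — producing a one-sided inverse of $p-q$ (or of $2-p-q$) from each hypothesis. Its delicacy is that the hypotheses are one-sided: $a\A\cap a^{\dag}\A=0$ only controls the right ideals $p\A,q\A$, whereas $p-q\in\A^{-1}$ is a two-sided statement, so the module-theoretic constructions of $h$ and $k$ only deliver a left or a right inverse; what rescues the argument is precisely the hermitian nature of $p-q$, $2-p-q$, $1-p$, $1-q$, which permits one to stop there. This is the only place where the ``hermitian'' clause in the definition of the Moore--Penrose inverse is used for more than book-keeping, and it is exactly what fails in a ring with involution but no norm, so the theorem is genuinely analytic. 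A harmless preliminary reduction, used throughout, is $(\nu a)^{\dag}=\nu^{-1}a^{\dag}$ for $\nu\neq0$, which lets $\lambda,\mu$ be carried through Proposition~\ref{pro8} without specialisation.
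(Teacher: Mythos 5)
Your architecture is sound and genuinely different from the paper's: where the paper quotes Koliha--Rako\v cevi\'c for (i) $\Leftrightarrow$ (ii) and handles each converse by an ad hoc two-sided computation, you work with the hermitian idempotents $p=aa^{\dag}$, $q=a^{\dag}a$ and close every converse with the analytic fact that a hermitian element, having real spectrum, is invertible as soon as it is one-sidedly invertible (boundary points of the spectrum are topological divisors of zero). I checked the main steps: (i) $\Rightarrow$ everything, (ii) $\Rightarrow$ (iii) via Proposition \ref{pro8}, and the implications (iii), (iv), (v), (vii), (ix) $\Rightarrow$ (i) all work: $p=h(p-q)$, $q=(h-1)(p-q)$ together with $\A=\A p+\A q$ (respectively $p=(p-q)(1-k)$, $q=-(p-q)k$ together with $\A=p\A+q\A$) do give one-sided invertibility of $p-q$, and hermiticity upgrades it. (Two of your appeals to ``fact (1)'' are not literally instances of it --- the directness $\A(1-r)\cap\A(1-s)=0$ and the computation of $\N(L_{\lambda a+\mu a^{\dag}})$ --- but the claims are true and easy with the decompositions you already have, so these are only presentation slips.)

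The one genuine gap is the branch (vi), (viii) $\Rightarrow$ (i). Under (vi) you assemble $\A=p\A\oplus q\A$ and, by your fact (2), obtain $\A=\A(1-p)\oplus\A(1-q)$; but the ``short computation with the corresponding projection'' does not deliver one-sided invertibility of $2-p-q$. Writing $R_m$ for that projection ($\A m=\A(1-p)$, $\A(1-m)=\A(1-q)$), all you get is $(2-p-q)m=1-p$ and $(2-p-q)(1-m)=1-q$, i.e.\ $1-p,\,1-q\in(2-p-q)\A$; to conclude right invertibility you would need $(1-p)\A+(1-q)\A=\A$, which under (vi) is not yet available --- it amounts to $\A=\A p\oplus\A q$, essentially the statement being proved, so using it would be circular. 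The left-handed products ($m(2-p-q)=2m-q$, $(1-m)(2-p-q)=2-2m-p$) are no better. Fortunately the repair lies entirely inside your own toolkit: drop $2-p-q$ and fact (3) here. Since $p+q\in\A^{-1}$ gives both $\A=p\A+q\A$ and $\A=\A p+\A q$, hypothesis (vi) yields $\A=p\A\oplus q\A$, and then exactly the computation you use for (iii) --- $p=h(p-q)$, $q=(h-1)(p-q)$, hence $\A=\A p+\A q\subseteq\A(p-q)$ --- shows $p-q$ is left invertible, hence invertible because it is hermitian; the mirror argument with $k$ settles (viii). With that substitution the proof is complete and self-contained (modulo Vidav's theorem and the boundary-of-spectrum fact), which is a real advantage over the paper's reliance on external results for (i) $\Leftrightarrow$ (ii).
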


\begin{proof} (i) $\Leftrightarrow$ (ii). According to \cite[Theorem 3.2]{KR}, statement (i)
is equivalent to $\A=aa^{\dag}\A\oplus a^{\dag}a\A= a\A\oplus a^{\dag}\A$ and $\A=\A aa^{\dag}\oplus \A a^{\dag}a=
\A a^{\dag}\oplus \A a$.\par

\noindent (ii) $\Rightarrow$ (iii).  According to \cite[Lemma 2.1]{KR},
$\A = (1-aa^{\dag})\A\oplus (1-a^{\dag}a)\A$. 
Since $A=aa^{\dag}\A\oplus a^{\dag}a\A$, according to the proof of 
Proposition \ref{pro8}(i)-(ii),
$$
L_{\lambda a+\mu a^{\dag}}\colon (1-aa^{\dag})\A\oplus (1-a^{\dag}a)\A\to aa^{\dag}\A\oplus a^{\dag}a\A,
$$
has the following matricial form:
$\begin{pmatrix}                                
L_{\lambda a} &0\\
0& L_{\mu a^{\dag}}\\
\end{pmatrix}$.
Therefore, according to  Proposition \ref{pro8}(iii)-(v), $L_{\lambda a + \mu a^\dag}$ is 
invertible, equivalently, $\lambda a + \mu a^{\dag}\in A^{-1}$.
The remaining identity is  clear.\par


\noindent (iii) $\Rightarrow$ (iv).
Let $x = (\lambda a + \mu a^{\dag})^{-1}$. Since
$1 = a(\lambda x) + a^\dag (\mu  x)$, $\A=a\A+a^{\dag}\A$ and by hypothesis, $\A=a\A\oplus a^{\dag}\A$.
Now, (iv) follows from \cite[Lemma 2.1]{KR}. \par

\noindent (iv) $\Rightarrow$ (ii).
As in the previous implication, the condition 
$\lambda a + \mu a^\dagger \in \A^{-1}$ implies $\A = a\A + a^\dag \A = \A a + \A a^\dagger$.
Consider any $x \in a\A \cap a^\dag A$. There exist $u,v \in \A$ such that
$x=au = a^\dag v$, which leads to $hau=h a^\dag v$, hence
$au=0$, and thus, $x=0$. It has been proved that $a\A \cap a^\dag A = 0$. 
Now consider $y \in \A a \cap \A a^\dagger$, i.e., 
$y=wa = ta^\dagger$ for some $w,t \in \A$.
Since $\lambda a + \mu a^\dag \in \A^{-1}$, exists $z \in \A$ such that
$(\lambda a + \mu a^\dag)z= 1$ (in fact, $z$ is the standard inverse
of $\lambda a + \mu a^\dagger$). Now, 
$h=h(\lambda a + \mu a^\dag)z = \lambda az$, and thus $\mu a^\dag z = 1-h$.
Since $yz =  waz=ta^\dag z$, $yz = \lambda^{-1}wh = \mu^{-1}t(1-h)$, which
by postmultiplying by $h$ leads to $yz=0$. 
By using $z \in \A^{-1}$, $y=0$.   As a result,
$\A a \cap \A a^\dag = 0$.

\noindent (iii) $\Rightarrow$ (v). Clear.\par

\noindent (v) $\Rightarrow$ (iii).
If $L_{\lambda a + \mu a^{\dag}}$ and
$R_{\lambda a + \mu a^{\dag}}\in \L(\A)$ are right invertible,
then ${\lambda a + \mu a^{\dag}}\in \A^{-1}$.\par

\noindent (ii) $\Rightarrow$ (vi). Recall that $\A = (1-aa^{\dag})\A \oplus
(1-a^{\dag}a)\A$ (\cite[Lemma 2.1]{KR}) and
consider $L_{aa^{\dag}+a^{\dag}a}\colon \A\to \A$. Let $x\in \A$ such that
$(aa^{\dag}+a^{\dag}a)x=0$. As a result, $aa^{\dag}x\in a\A\cap a^{\dag}\A=0$. Similarly, $a^{\dag}ax=0$.
Consequently, $x\in  (aa^{\dag})^{-1}(0)\cap (a^{\dag}a)^{-1}(0) =
(1-aa^{\dag})\A\cap (1-a^{\dag}a)\A=0$, i.e.,
$\N(L_{aa^{\dag}+a^{\dag}a})=0$.
On the other hand, given $x\in \A$, there exist $y$, $z\in \A$ such that
$x=(1-aa^{\dag})y + (1-a^{\dag}a)z$.
Then,
$$
aa^{\dag}x = aa^{\dag}(1-aa^{\dag})y+aa^{\dag}(1-a^{\dag}a)z
= aa^{\dag}(1-a^{\dag}a)z = (aa^{\dag}+a^{\dag}a)(1-a^{\dag}a)z.
$$
As a result, $aa^{\dag}\A\subseteq \R(L_{aa^{\dag}+a^{\dag}a})$. Similarly,
$$
a^{\dag}ax = a^{\dag}a(1-aa^{\dag})y +a^{\dag}a(1-a^{\dag}a)z = 
a^{\dag}a(1-aa^{\dag})y = ( aa^{\dag}+a^{\dag}a ) (1-aa^{\dag})y,
$$
which implies that $a^{\dag}a\A\subseteq \R(L_{aa^{\dag}+a^{\dag}a})$. 
Since $A=aa^{\dag}\A\oplus a^{\dag}a\A$, 
$\R(L_{aa^{\dag}+a^{\dag}a})=\A$. 
Therefore, ${aa^{\dag}+a^{\dag}a}\in \A^{-1}$.

\noindent (vi) $\Rightarrow$ (ii). 
If $aa^{\dag}+a^{\dag}a\in \A^{-1}$, then  $\A=a\A+a^{\dag}\A$ and $\A=\A a+\A a^{\dag}$.
By hypothesis, $\A = a\A \oplus a^\dag A$. Now, \cite[Lemma 2.1]{KR} assures  
the existence of an idempotent $h$ such that $ha=a$ and $ha^\dag = 0$. To prove (ii), 
it is sufficient to prove $\A a \cap \A a^\dag = 0$ (this is rather similar to the 
proof of (iv) $\Rightarrow$ (ii)). If $y \in \A a \cap \A a^\dag $, there exist
$w,t \in \A$ such that $y=wa=ta^\dagger$. Let $z=(aa^\dag  + a^\dag a)^{-1}$.
Since $h = h(aa^\dag z + a^\dag a z) = aa^\dag z$ and $(aa^\dag + a^\dag a)z= 1$,
 $a^\dag a z = 1-h$. Therefore, $a^\dag h = a^\dag z$ and
$az = a(1-h)$. Since $y=wa=ta^\dagger$,  $yz=waz=ta^\dag z$, and thus
$yz=wa(1-h) = ta^\dag h$. Thus, $yz=0$ and using the invertibility of $z$, $y=0$.\par

\noindent (i) $\Leftrightarrow$ (vi), (vii), (viii) or (ix).
Condition (i) is invariant under the reversal of the algebra multiplication.
Precisely speaking, apply each of these conditions to the algebra
$(\A, \circ)$, where $x \circ y = yx$, and then reinterpret the results in the original algebra.
Conditions (iii), (iv), (vi) and (ii) yield that (i) is equivalent to
any of the conditions (vi), (vii),  (viii) or (ix) respectively.
\end{proof}


\indent The next result deals with a weaker condition than the invertibility of 
$aa^\dag + a^\dag a$ when $a \in \A^\dag$. 
It is noteworthy that $aa^\dag - a^\dag a \in \A^{-1}$ implies 
$aa^\dag + a^\dag a \in \A^{-1}$ (see Theorem \ref{thm7} or \cite[Theorem 3.5]{KR2}). 
Also, in \cite[Theorem 3.3]{KR2} the authors characterized the invertibility of 
$aa^\dag + a^\dag a$.
Furthermore, 
for $b \in \A$, observe that $L_b$ is injective if and only if 
the condition $bx=0$ implies $x=0$ when $x \in \A$ (i.e., $b$
satisfies a kind of left cancellation property). \par

\begin{proposition}\label{pro37}
Let $\A$ be a unital Banach algebra and consider $a\in \A^{\dag}$.
The following statements are equivalent.
\begin{itemize}
\item[{\rm (i)}] $L_{aa^{\dag}+a^{\dag}a}$ is injective.
\item[{\rm (ii)}] $a^\dag a \A \cap aa^\dag (1 - a^\dag a) \A= 0$ and
	$a^{-1}(0) \cap (a^\dag)^{-1}(0) = 0$.	
\end{itemize}
\end{proposition}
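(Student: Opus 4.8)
The plan is to work entirely with the two idempotents $p:=aa^{\dag}$ and $q:=a^{\dag}a$, using only that $p^{2}=p$, $q^{2}=q$, that for an idempotent $e\in\A$ one has $x\in e\A\iff ex=x$ and $x\in(1-e)\A\iff ex=0$, and the identities $a^{-1}(0)=(a^{\dag}a)^{-1}(0)=(1-q)\A$ and $(a^{\dag})^{-1}(0)=(aa^{\dag})^{-1}(0)=(1-p)\A$ recalled in the preliminaries. Under these translations, statement (ii) becomes: $q\A\cap p(1-q)\A=0$ and $(1-p)\A\cap(1-q)\A=0$. Thus it suffices to prove that $L_{p+q}$ is injective if and only if both of these intersections vanish.

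For (i)$\Rightarrow$(ii) I would argue as follows. If $x\in(1-p)\A\cap(1-q)\A$ then $px=0=qx$, so $(p+q)x=0$ and injectivity gives $x=0$; hence $(1-p)\A\cap(1-q)\A=0$. For the other intersection, take $z\in q\A\cap p(1-q)\A$ and write $z=p(1-q)w$ for some $w\in\A$; then $qz=z$ and $pz=z$. Setting $x:=z-2(1-q)w$ one computes $qx=z$ and $px=pz-2p(1-q)w=z-2z=-z$, so $(p+q)x=px+qx=0$; injectivity forces $x=0$, whence $z=qx=0$.

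For (ii)$\Rightarrow$(i), suppose $(p+q)x=0$ and decompose $x=x_{1}+x_{2}$ with $x_{1}:=qx$ and $x_{2}:=(1-q)x$. Since $qx_{1}=x_{1}$ and $qx_{2}=0$, the hypothesis becomes $x_{1}+px_{1}+px_{2}=0$; left-multiplying by $p$ gives $2px_{1}+px_{2}=0$, and subtracting the two relations yields first $x_{1}=px_{1}$ and then $x_{1}=-\tfrac12\,px_{2}=-\tfrac12\,p(1-q)x\in q\A\cap p(1-q)\A$. By (ii) this forces $x_{1}=0$, hence $px_{2}=0$ and $x=x_{2}=(1-q)x\in(1-p)\A\cap(1-q)\A$, which by (ii) is $0$. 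So $x=0$ and $L_{p+q}$ is injective.

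The bookkeeping with idempotents is routine; the one genuinely non-automatic step is producing, in (i)$\Rightarrow$(ii), the explicit element $x=z-2(1-q)w$ that turns a nonzero member of $q\A\cap p(1-q)\A$ into a nonzero element of $\ker L_{p+q}$, and, dually, recognizing in (ii)$\Rightarrow$(i) that the decomposition $x=qx+(1-q)x$ is exactly what pins $qx$ inside that same intersection. Once these are seen, both implications close with a couple of lines of computation.
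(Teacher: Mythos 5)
Your proof is correct; I checked each computation ($qx=z$, $px=-z$ in the forward direction, and the relations $x_1=px_1$, $x_1=-\tfrac12 p(1-q)x$, $px_2=0$ in the converse) and they all go through, with the translation of (ii) into $q\A\cap p(1-q)\A=0$ and $(1-p)\A\cap(1-q)\A=0$ justified by the identities $a^{-1}(0)=(a^{\dag}a)^{-1}(0)=(1-a^{\dag}a)\A$ and $(a^{\dag})^{-1}(0)=(aa^{\dag})^{-1}(0)=(1-aa^{\dag})\A$ from the preliminaries. Your route differs from the paper's in two ways. First, the paper's proof of (ii) $\Rightarrow$ (i) decomposes a kernel element via $2x=(1-aa^{\dag})x+(1-a^{\dag}a)x$ and studies the restrictions $L_1,L_2$ of $L_{aa^{\dag}+a^{\dag}a}$ to $(1-aa^{\dag})\A$ and $(1-a^{\dag}a)\A$, whereas you split $x=qx+(1-q)x$ and pin $qx$ directly inside $q\A\cap p(1-q)\A$; your version is shorter and avoids the restricted operators. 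Second, and more substantially, the paper's argument uses the Moore--Penrose relations of $a$ explicitly (e.g.\ premultiplying $aa^{\dag}v=0$ by $a^{\dag}$, and computing $a(1-a^{\dag}a)x=0$), while after the translation you use nothing but the idempotent identities for $p=aa^{\dag}$ and $q=a^{\dag}a$; so your argument in fact proves the more general statement that for arbitrary idempotents $p,q$ in a unital algebra, $L_{p+q}$ is injective if and only if $q\A\cap p(1-q)\A=0$ and $(1-p)\A\cap(1-q)\A=0$, which fits the ``two idempotents'' viewpoint mentioned in the introduction. In the forward direction your kernel element $z-2(1-q)w$ is, up to sign, the same element the paper exploits when it applies injectivity to $2(aa^{\dag}+a^{\dag}a)(1-a^{\dag}a)v=(aa^{\dag}+a^{\dag}a)x$, so there the two proofs essentially coincide.
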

\begin{proof}
(i) $\Rightarrow$ (ii).
Let $x \in a^\dag a \A \cap aa^\dag (1 - a^\dag a) \A$.
There exist $u,v \in \A$ such that 
\begin{equation}\label{n1}
x = a^\dag u = aa^\dag (1 - a^\dag a)v.
\end{equation}
Now, 
$$
(aa^\dag + a^\dag a)x = aa^\dag x + a^\dag a x = 
aa^\dag aa^\dag (1 - a^\dag a)v + a^\dag aa^\dag u = 
a a^\dag (1 - a^\dag a)v + a^\dag u = 2x.
$$
Hence
$$
2(aa^\dag + a^\dag a)(1 - a^\dag a)v = 2 aa^\dag (1 - a^\dag a)v = 2x
= (aa^\dag + a^\dag a)x.
$$
The hypothesis implies $2(1 - a^\dag a)v = x$, and thus
$2aa^\dag (1 - a^\dag a)v = aa^\dag x$. By using \eqref{n1},
$2x=x$, hence $x=0$.

Let $y \in a^{-1}(0) \cap (a^\dag)^{-1}(0)$, i.e., 
$ay=a^\dag y=0$. It is evident that $(aa^\dag + a^\dag a)y = 0$, and
the hypothesis leads to $y=0$.

\noindent (ii) $\Rightarrow$ (i).
It is evident that $L_{aa^\dag + a^\dag a}(( 1 - a a^\dag ) \mathcal{A}) 
\subseteq a^\dag \mathcal{A}$ and 
$L_{aa^\dag + a^\dag a}(( 1 - a^\dag a) \mathcal{A}) 
\subseteq a \mathcal{A}$. Thus, if $L_1$ and $L_2$ are the restrictions
of $L_{aa^\dag + a^\dag a}$ to $(1-aa^\dag)\A$ and 
$(1-a^\dag a)\A$ respectively, then
$$
L_1: (1-aa^\dag)\A \to a^\dag \A
\qquad
\text{and}
\qquad
L_2: (1-a^\dag a)\A \to a \A.
$$
Let $x \in \N (L_{aa^\dag + a^\dag a})$. 
Since $x = x-aa^\dag x - a^\dag a x = (1 - aa^\dag) x + 
(1 - a^\dag a)x - x$, if 
$u=(1 - aa^\dag) x$ and $v=(1 - a^\dag a)x$, then
$2x=u+v$, and thus $0=L_1(u)+L_2(v)$.
Moreover,
$$
L_2(v)= (aa^\dag + a^\dag a) (1-a^\dag a)x
= 
a a^\dag (1-a^\dag a)x \in a a^\dag (1-a^\dag a) \A 
$$
and
$$
L_2(v) = -L_1(u) \in a^\dag \A = a^\dag a \A.
$$
By hypothesis, $L_2(v)=0$, and thus $L_1(u)=0$. Therefore,
$$
0 = (aa^\dag + a^\dag a)v = 
(aa^\dag + a^\dag a)(1-a^\dag a)x =
aa^\dag (1-a^\dag a)x = aa^\dag v, 
$$
which by a premultiplication by $a^\dagger$ leads to $0=a^\dag v $.
In other words, $v \in (a^\dag)^{-1}(0)$. It is evident that
$av = a(1-a^\dag a)x = 0$. Thus, $v \in a^{-1}(0) \cap (a^\dag)^{-1}(0) = 0$.
In a similar way it is possible to prove that  $u=0$. Since $2x=u+v$, $x=0$.
\end{proof}


In the following proposition the surjectivity of the operator $L_{aa^{\dag}+a^{\dag}a}\colon \A\to \A$ will be characterized.
\begin{proposition}\label{pro38}
Let $\A$ be a unital Banach algebra and consider $a \in \A^\dagger$.
Then, the operator $L_{aa^\dag + a^\dag a}\in \L(\A)$ is surjective if and only if 
$aa^\dag + a^\dag a$ is regular and $(aa^\dag + a^\dag a)_{-1}(0) = 0$.
\end{proposition}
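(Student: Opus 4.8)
The statement to prove is a "surjectivity $\iff$ regular $+$ trivial right-annihilator" characterization for the operator $L_{aa^\dag+a^\dag a}$. The natural strategy mirrors a standard fact: for $b\in\A$, the left-multiplication operator $L_b$ is surjective iff $b$ is left-invertible in $\A$, iff $b$ is regular with $b_{-1}(0)=0$. So the core of the plan is to specialize this principle to $b=aa^\dag+a^\dag a$, but phrased entirely in terms of the algebra $\A$ (not $\L(\A)$), and to verify the two directions carefully.

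For the forward direction, assume $L_{aa^\dag+a^\dag a}$ is surjective. Then there is $z\in\A$ with $(aa^\dag+a^\dag a)z=1$, i.e., $aa^\dag+a^\dag a$ is left-invertible, with left inverse $z$. Setting $b=aa^\dag+a^\dag a$, I would then observe $b = b(zb) = (bz)b$ only after producing a two-sided-type identity; the cleanest route is: from $zb\cdot z = z(bz)$ one does not immediately get regularity, so instead I would argue directly that a left-invertible element is regular by taking the generalized inverse to be $zbz$ — check $b(zbz)b = (bz)b(zb) $; using $zb=1$ this collapses to $b\cdot 1\cdot 1 = b$ after noting $bz$ need not be $1$, so more carefully $b(zbz)b$: using $(zb)=1$ on the rightmost pair gives $b(zbz)b = bz(bz)b$, hmm. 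The safe and honest move is: left-invertibility gives $b_{-1}(0)=0$ immediately (if $xb=0$ then $x = x(bz)$... again $bz\neq1$); better: if $xb=0$ then $0 = xbz$, and since $zb=1$... this still needs care. I would instead use: $L_b$ surjective $\Rightarrow$ there is $z$ with $bz=1$? No — $L_b(z)=bz$, so $L_b$ surjective means $bz=1$ for some $z$, i.e., $b$ is \emph{right}-invertible. Then $b = b(zb)z^{-1}$-style arguments are unavailable, but $b$ right-invertible gives regularity via $b = b\cdot z\cdot b$ directly from $bz=1$, and gives $b_{-1}(0)=0$ directly: if $xb=0$ then $x = x(bz) = (xb)z = 0$. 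This settles the forward direction cleanly.

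For the converse, assume $b=aa^\dag+a^\dag a$ is regular with generalized inverse $c$ (so $bcb=b$) and $b_{-1}(0)=0$. The idempotent $bc$ satisfies $(1-bc)b = b - bcb = 0$, so $1-bc \in b_{-1}(0)=0$, whence $bc=1$; thus $b$ is right-invertible and $L_b(c\,x)= bcx = x$ for all $x$, proving surjectivity. The two directions together give the equivalence; I would also remark that regularity of $b$ here is automatic from earlier structure but is kept in the statement for symmetry with Proposition~\ref{pro37}.

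The main obstacle is purely bookkeeping: being careful about which side the multiplication operator acts on, so that "surjective" is correctly translated as right-invertibility of $b$ (since $L_b(z)=bz$), and then deriving the annihilator condition and regularity from right-invertibility rather than accidentally assuming two-sided invertibility. No deep input about the Moore-Penrose inverse, hermitian elements, or the idempotents $aa^\dag$, $a^\dag a$ is actually needed — this is a general fact about $L_b$ for a single element $b\in\A$, and the proof should be short.
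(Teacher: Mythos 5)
Your final argument is correct and coincides with the paper's own proof: surjectivity of $L_{aa^\dag+a^\dag a}$ yields $u$ with $(aa^\dag+a^\dag a)u=1$, giving regularity and, by postmultiplication by $u$, the triviality of the right annihilator; conversely, regularity plus $(aa^\dag+a^\dag a)_{-1}(0)=0$ forces the idempotent $(aa^\dag+a^\dag a)c$ to equal $1$, whence surjectivity. The only wrinkle is the mid-proof hesitation over left versus right invertibility, which you resolve correctly before concluding, so nothing essential differs from the paper.
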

\begin{proof}
Assume that $L_{aa^\dag + a^\dag a}$ is surjective. 
Since $1 \in L_{aa^\dag + a^\dag a}(\A)$, there exists
$u \in \A$ such that $(aa^\dag + a^\dag a)u=1$.
This, obviously, implies that $aa^\dag + a^\dag a$ is regular. Also, 
if $x \in \A$ satisfies $0=x(aa^\dag + a^\dag x)$, then by a postmultiplication by $u$, 
$0=x$.\par

To prove the converse, consider $b \in \A$ such that 
$(aa^\dag + a^\dag)b(aa^\dag + a^\dag a)=aa^\dag + a^\dag a$.
Since $[(aa^\dag + a^\dag a)b-1](aa^\dag + a^\dag a)=0$,
$(aa^\dag + a^\dag a)b-1 \in (aa^\dag + a^\dag a)_{-1}(0) = 0$.
Hence $(aa^\dag + a^\dag a)b=1$. Now, if $y \in \A$, then
$L_{aa^\dag + a^\dag a}(by)=y$. This proves the surjectivity of 
$L_{aa^\dag + a^\dag a}$.
\end{proof}

As a corollary of Propositions \ref{pro37} and \ref{pro38}, conditions that characterize  the invertibility of  $aa^\dag + a^\dag a$
will be given. 

\begin{theorem}\label{thm39}
Let $\A$ be a unital Banach algebra and consider $a \in \A^\dagger$.
Then, necessary and sufficient for $aa^\dag + a^\dag a\in \A^{-1}$
is that $aa^\dag + a^\dag a$ is regular, $(aa^\dag + a^\dag a)_{-1}(0) = 0$,
$a^\dag a \A \cap aa^\dag (1 - a^\dag a) \A= 0$ and $a^{-1}(0) \cap (a^\dag)^{-1}(0) = 0$.
\end{theorem}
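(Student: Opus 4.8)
The plan is to recognize this theorem as a direct combination of Propositions \ref{pro37} and \ref{pro38}, exactly as the surrounding text announces. Set $b = aa^\dag + a^\dag a$. The key reduction is that $b \in \A^{-1}$ if and only if the left-multiplication operator $L_b \in \L(\A)$ is invertible, and that this in turn amounts to $L_b$ being both injective and surjective. Once this bridge is in place, Proposition \ref{pro37} identifies injectivity of $L_b$ with the pair of conditions $a^\dag a \A \cap aa^\dag(1 - a^\dag a)\A = 0$ and $a^{-1}(0) \cap (a^\dag)^{-1}(0) = 0$, while Proposition \ref{pro38} identifies surjectivity of $L_b$ with the pair ``$b$ is regular'' and $b_{-1}(0) = 0$. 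Conjoining the four conditions yields precisely the statement of Theorem \ref{thm39}.

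First I would dispose of the bridging step $b \in \A^{-1} \iff L_b$ bijective. The preliminaries already record that $b \in \A^{-1}$ iff $L_b \in \L(\A)^{-1}$, and if $L_b \in \L(\A)^{-1}$ then it is certainly a bijection of $\A$. For the converse I would argue directly rather than invoke the open mapping theorem: if $L_b$ is surjective then $1 \in b\A$, so $bu = 1$ for some $u \in \A$; if moreover $L_b$ is injective, then from $b(ub - 1) = (bu)b - b = 0$ we get $ub - 1 = 0$, so $u$ is a two-sided inverse of $b$ and hence $b \in \A^{-1}$. Thus $b \in \A^{-1}$ exactly when $L_b$ is injective and surjective. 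From here the proof is immediate: combine Proposition \ref{pro37} for the injectivity half and Proposition \ref{pro38} for the surjectivity half, in both directions, since those two propositions are themselves equivalences.

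I do not expect a genuine obstacle here, since all the substance has already been carried out in Propositions \ref{pro37} and \ref{pro38} (and ultimately in the operator decompositions $L_b((1-aa^\dag)\A) \subseteq a^\dag\A$ and $L_b((1-a^\dag a)\A) \subseteq a\A$ exploited there). The one point deserving a word of care is that the surjectivity criterion is phrased through the right annihilator $b_{-1}(0)$ whereas the injectivity criterion rests on left annihilators; one should note that this is not an accidental redundancy, i.e.\ injectivity of $L_b$ (a left cancellation property for $b$) does not by itself force $b_{-1}(0) = 0$ in a general unital Banach algebra, so all four listed conditions are genuinely needed. Beyond that, the argument is a short bookkeeping of the two preceding propositions.
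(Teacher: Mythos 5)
Your proposal is correct and follows the same route as the paper, whose proof is literally the one-line instruction to combine Propositions \ref{pro37} and \ref{pro38}; your explicit bridging argument (that $L_{aa^\dag+a^\dag a}$ bijective forces a two-sided inverse via $bu=1$ and $b(ub-1)=0$) just spells out the step the paper leaves implicit. No gaps.
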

\begin{proof}Apply Propositions \ref{pro37} and \ref{pro38}.
\end{proof}

\indent Next co-EP Banach space operators will be characterized. \par


\begin{proposition}\label{cor8}
Let $\X$ be a Banach space, $T\in \L(\X)$ be Moore-Penrose
invertible, and
$\lambda, \mu \in \mathbb{C} \setminus \{0 \}$.
The following statements are equivalent.\par
\begin{itemize}
\item[{\rm (i)}] $T$ is co-EP;

\item[{\rm (ii)}] $\lambda T + \mu T^{\dag}\in \L(\X)$ is invertible and
    $\R(T)\cap \R(T^{\dag})=0$;

\item[{\rm (iii)}] $\lambda T + \mu T^{\dag}\in \L(\X)$ is invertible and
there exists an idempotent $P\in \L(\X)$  such that $\R(T)\subseteq 
\R(P)$ and $\R(T^\dagger)\subseteq \N(P)$;

\item[{\rm (iv)}] $TT^{\dag}+T^{\dag}T\in \L(\X)$ is invertible
    and  $\R(T)\cap \R(T^{\dag})=0$;
    
\item[{\rm (v)}] $\lambda T + \mu T^{\dag}\in \L(\X)$ is invertible and
there exists an idempotent $Q\in \L(\X)$  such that $\R(I-Q)\subseteq 
\N(T)$ and $\R(Q)\subseteq \N(T^\dagger)$;

\item[{\rm (vi)}] $TT^{\dag}+T^{\dag}T\in \L(\X)$ is invertible
    and  $\R(I-TT^{\dag})\cap \R(I-T^{\dag}T)=0$;

\item[{\rm (vii)}] $\lambda T + \mu T^{\dag}\in \L(\X)$ is invertible and
$\R(I-TT^{\dag})\cap \R(I-T^{\dag}T)=0$.
\end{itemize}
\end{proposition}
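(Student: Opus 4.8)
The plan is to deduce this proposition from Theorem~\ref{thm7} by specializing that result to the Banach algebra $\A=\L(\X)$ and the element $a=T$, and then translating its conclusions --- invertibility of elements, triviality of one-sided ideals, existence of idempotents --- into the geometric conditions on $\X$ displayed above. Two preliminary facts underlie the translation. First, since $T$ is Moore--Penrose invertible, $TT^{\dagger}$ and $T^{\dagger}T$ are bounded idempotents; hence $\R(T)=\R(TT^{\dagger})$ and $\R(T^{\dagger})=\R(T^{\dagger}T)$ are closed and complemented, while $\N(T^{\dagger})=\N(TT^{\dagger})=\R(I-TT^{\dagger})$ and $\N(T)=\N(T^{\dagger}T)=\R(I-T^{\dagger}T)$; these last identities are exactly what turn the idempotent conditions of (v)--(vii) into statements about $\N(T)$ and $\N(T^{\dagger})$. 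Second, an element of $\L(\X)$ is invertible in the algebra if and only if it is an invertible operator (open mapping theorem), so the hypotheses ``$\lambda a+\mu a^{\dagger}\in\A^{-1}$'' and ``$aa^{\dagger}+a^{\dagger}a\in\A^{-1}$'' of Theorem~\ref{thm7} are precisely the invertibility statements appearing in (ii)--(vii).

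Next I would record the dictionary between one-sided ideals of $\L(\X)$ and subspaces of $\X$. Since $TT^{\dagger}$ is the identity on $\R(T)$, one has $TT^{\dagger}S=S$ whenever $\R(S)\subseteq\R(T)$, together with $S=T(T^{\dagger}S)$; thus $T\L(\X)=\{S\in\L(\X):\R(S)\subseteq\R(T)\}$, and likewise for $T^{\dagger}$. Hence $T\L(\X)\cap T^{\dagger}\L(\X)=\{S:\R(S)\subseteq\R(T)\cap\R(T^{\dagger})\}$, which is $\{0\}$ if and only if $\R(T)\cap\R(T^{\dagger})=\{0\}$ (for the nontrivial direction, test with a rank-one operator $S\colon z\mapsto\varphi(z)y$, where $0\neq y\in\R(T)\cap\R(T^{\dagger})$ and $\varphi$ is a nonzero continuous functional on $\X$). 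Dually, using that $T^{\dagger}T$ projects onto $\R(T^{\dagger})$ along $\N(T)$, one gets $S=(ST^{\dagger})T$ for every $S$ with $\N(T)\subseteq\N(S)$, hence $\L(\X)T=\{S:\N(T)\subseteq\N(S)\}$ and likewise for $T^{\dagger}$; therefore $\L(\X)T\cap\L(\X)T^{\dagger}=\{S:\N(T)+\N(T^{\dagger})\subseteq\N(S)\}$, which (null spaces being closed) is $\{0\}$ if and only if $\N(T)+\N(T^{\dagger})$ is dense in $\X$ --- for the nontrivial direction, if this sum were not dense, Hahn--Banach would provide a nonzero functional $\varphi$ annihilating it, and then $S\colon z\mapsto\varphi(z)x_{0}$ with $x_{0}\neq 0$ would be a nonzero element of the intersection. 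Finally, the idempotents of $\L(\X)$ are the bounded projections, and for such a projection $P$ one checks at once that $PT=T\iff\R(T)\subseteq\R(P)$, $PT^{\dagger}=0\iff\R(T^{\dagger})\subseteq\N(P)$, $TP=T\iff\R(I-P)\subseteq\N(T)$, and $T^{\dagger}P=0\iff\R(P)\subseteq\N(T^{\dagger})$.

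Granting the dictionary, the proposition is a matching of items against Theorem~\ref{thm7}: statement (i) here is \ref{thm7}(i); via the left-ideal entry, (ii) here is \ref{thm7}(iii) and (iv) here is \ref{thm7}(vi); via the idempotent entries, (iii) here is \ref{thm7}(iv) (taking the idempotent $h$ to be the projection $P$) and (v) here is \ref{thm7}(vii) (taking $k=Q$); and via the right-ideal entry together with $\N(T)=\R(I-T^{\dagger}T)$ and $\N(T^{\dagger})=\R(I-TT^{\dagger})$, statements (vi) and (vii) here are \ref{thm7}(viii) and \ref{thm7}(ix). Since all the statements of Theorem~\ref{thm7} are pairwise equivalent, so are (i)--(vii).

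The only step carrying genuine analytic content is the right-ideal translation: it is there, and not in the left-ideal or idempotent cases, that closedness of $\R(T)$ (supplied by Moore--Penrose invertibility) and a Hahn--Banach separation argument are needed in order to pass between the algebraic triviality of $\L(\X)T\cap\L(\X)T^{\dagger}$ and the spanning behaviour of $\R(I-TT^{\dagger})$ and $\R(I-T^{\dagger}T)$ in $\X$. Everything else is a formal consequence of the defining identities $TT^{\dagger}T=T$ and $T^{\dagger}TT^{\dagger}=T^{\dagger}$ together with the elementary rank-one constructions.
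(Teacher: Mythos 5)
Your overall strategy is the same as the paper's: specialize Theorem \ref{thm7} to $\A=\L(\X)$, $a=T$, and translate the invertibility, one-sided-ideal and idempotent conditions into geometric conditions on $\X$ (the paper delegates this translation to \cite[Lemma 5.1]{KR} and \cite[Lemma 2.1]{KR}, whereas you prove the dictionary by hand). Your left-ideal entry ($T\L(\X)=\{S:\R(S)\subseteq\R(T)\}$, hence $T\L(\X)\cap T^\dag\L(\X)=0$ if and only if $\R(T)\cap\R(T^\dag)=0$) and your idempotent entries are correct, and they do settle the equivalence of (i), (ii), (iii), (iv) and (v) with the corresponding items of Theorem \ref{thm7}.

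The gap is in the final matching of items (vi) and (vii). Your right-ideal entry --- correctly --- says that $\L(\X)T\cap\L(\X)T^\dag=0$ is equivalent to the density of $\N(T)+\N(T^\dag)=\R(I-T^\dag T)+\R(I-TT^\dag)$ in $\X$. But the condition appearing in (vi) and (vii) is $\R(I-TT^\dag)\cap\R(I-T^\dag T)=0$, that is $\N(T^\dag)\cap\N(T)=0$, which is a different statement; the sentence ``via the right-ideal entry together with $\N(T)=\R(I-T^\dag T)$ and $\N(T^\dag)=\R(I-TT^\dag)$, statements (vi) and (vii) here are \ref{thm7}(viii) and \ref{thm7}(ix)'' silently conflates the two. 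The conflation cannot be bridged under the accompanying invertibility hypotheses: if $T$ is invertible (say $T=I$, $\X\neq 0$), then $T^\dag=T^{-1}$, the operators $TT^\dag+T^\dag T=2I$ and $\lambda T+\mu T^\dag$ (for $\lambda+\mu\neq 0$) are invertible and $\N(T)\cap\N(T^\dag)=0$, yet $\L(\X)T\cap\L(\X)T^\dag=\L(\X)\neq 0$ and $TT^\dag-T^\dag T=0$ is not invertible; indeed the second condition in (vi) is automatic as soon as $TT^\dag+T^\dag T$ is injective, so (vi) as displayed says no more than invertibility of $TT^\dag+T^\dag T$, which does not characterize co-EP. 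What your dictionary actually proves is the proposition with (vi), (vii) formulated with the sum $\R(I-TT^\dag)+\R(I-T^\dag T)$ dense in (equivalently, given the other hypotheses, equal to) $\X$, not with the displayed intersection. It is worth noting that the paper's own one-line appeal to \cite[Lemma 5.1]{KR} and \cite[Lemma 2.1]{KR} passes over exactly the same point, so your explicit dictionary has in fact surfaced an issue with the literal formulation of (vi)--(vii) rather than a mere slip in your write-up; but as a proof of the statement as it stands your argument is incomplete at this step (and no argument can close it), so you should either prove the corrected version or flag the discrepancy explicitly.
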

\begin{proof}
Observe that under any of the hypotheses we have
$\L(\X)T+\L(\X)T^\dag = \L(\X)$ and
$T\L(\X)+T^\dag\L(\X) = \L(\X)$.
Note that according to \cite[Lemma 5.1]{KR},
$TT^{\dag}\L(\X)\cap T^{\dag}T\L(\X) = T\L(\X)\cap T^{\dag}\L(\X)=0$
if and only if $\R(T)\cap \R(T^{\dag})=0$.
Moreover, according to \cite[Lemma 5.1]{KR} and \cite[Lemma 2.1]{KR}, an equivalent condition
for $\L(\X)TT^{\dag} \cap \L(\X)T^{\dag}T =
\L(\X)T^{\dag}\cap \L(\X)T = 0$
is that $\R(I-TT^{\dag})\cap \R(I-T^{\dag}T)=0$.
To conclude the proof, apply Theorem \ref{thm7}.
\end{proof}


\indent To end this section, co-EP Banach space operators defined on finite dimensional
Banach spaces will be considered. \par

\begin{remark}\label{rem10}\rm Let $\X$ be a finite dimensional Banach space and consider
$T\in \L(\X)$. If $T$ is co-EP, then $\dim \X=2\dim \R(T)= 2 \dim \N(T)$.
In fact, since $T$ is Moore-Penrose invertible,
$\dim \N(T)+\dim \R(T^{\dag}) = \dim \X =
\dim \N(T^{\dag}) + \dim \R(T)$. In addition,
since $\dim \N(T)+\dim \R(T) = \dim \X$,
$\dim \N(T)=\dim \N(T^{\dag})$ and $\dim \R(T) =
\dim \R(T^{\dag})$.
Now well, according to \cite[Theorem 5.2]{KR}, $\dim \X=2\dim \R(T)= 2 \dim \N(T)$.
\end{remark}

\begin{proposition}\label{cor9}Let $\X$ be a finite dimensional Banach space and consider $T\in \L(\X)$
such that $T$ is Moore-Penrose invertibe.
The following statements are equivalent.\par
\begin{itemize}
\item[{\rm (i)}] $T$ is co-EP;
\item[{\rm (ii)}] $\R(T)\cap \R(T^{\dag})=0$ and
    $\N(T)\cap \N(T^{\dag})=0$;
\item[{\rm (iii)}] $ \X= \R(T)+ \R(T^{\dag})$ and
    $\X=  \N(T)+ \N(T^{\dag})$.
\end{itemize}
\end{proposition}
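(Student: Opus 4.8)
The plan is to leverage the finite-dimensionality to trade each intersection condition for a sum condition via a dimension count, and then invoke Theorem \ref{thm7} (in the form of Proposition \ref{cor8}) together with Remark \ref{rem10}. First I would record the dimension identities that hold whenever $T$ is Moore-Penrose invertible on a finite-dimensional space: from $T=TT^\dagger T$ and $T^\dagger = T^\dagger T T^\dagger$ one gets $\R(T)=\R(TT^\dagger)$, $\N(T)=\N(T^\dagger T)$, $\R(T^\dagger)=\R(T^\dagger T)$, $\N(T^\dagger)=\N(TT^\dagger)$, and since $TT^\dagger$ and $T^\dagger T$ are idempotents, $\dim\R(T)+\dim\N(T^\dagger)=\dim\X$ and $\dim\R(T^\dagger)+\dim\N(T)=\dim\X$. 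Combined with the rank--nullity identity $\dim\R(T)+\dim\N(T)=\dim\X$, these give $\dim\N(T)=\dim\N(T^\dagger)$ and $\dim\R(T)=\dim\R(T^\dagger)$, exactly as noted in Remark \ref{rem10}.

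For (ii) $\Leftrightarrow$ (iii): in a finite-dimensional space, $\R(T)\cap\R(T^\dagger)=0$ is equivalent to $\dim(\R(T)+\R(T^\dagger))=\dim\R(T)+\dim\R(T^\dagger)$, and $\X=\R(T)+\R(T^\dagger)$ is equivalent to $\dim(\R(T)+\R(T^\dagger))=\dim\X$. So these two statements agree precisely when $\dim\R(T)+\dim\R(T^\dagger)=\dim\X$, i.e., when $2\dim\R(T)=\dim\X$ (using $\dim\R(T)=\dim\R(T^\dagger)$). The analogous statement for the null spaces agrees precisely when $2\dim\N(T)=\dim\X$. The point is that I cannot simply assume these rank identities hold a priori — I must show that \emph{either} of conditions (ii), (iii) forces $2\dim\R(T)=\dim\X=2\dim\N(T)$, and then the equivalence drops out. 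If (ii) holds, then $\R(T)\oplus\R(T^\dagger)\subseteq\X$ gives $\dim\R(T)+\dim\R(T^\dagger)\le\dim\X$, and $\N(T)\oplus\N(T^\dagger)\subseteq\X$ gives $\dim\N(T)+\dim\N(T^\dagger)\le\dim\X$; adding these and using $\dim\R(T)+\dim\N(T)=\dim\X=\dim\R(T^\dagger)+\dim\N(T^\dagger)$ forces both inequalities to be equalities, whence $\X=\R(T)\oplus\R(T^\dagger)$ and $\X=\N(T)\oplus\N(T^\dagger)$, which is (iii). Conversely, if (iii) holds, the same dimension bookkeeping (now with $\le$ reversed) forces the sums to be direct, giving (ii).

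For the link to (i), the cleanest route is through Proposition \ref{cor8}. Given (ii) (equivalently (iii)), I have $\R(T)\oplus\R(T^\dagger)=\X$, so $T\L(\X)+T^\dagger\L(\X)=\L(\X)$; I then want to upgrade this to invertibility of $\lambda T+\mu T^\dagger$ (say $\lambda=\mu=1$) and apply Proposition \ref{cor8}(ii). Here finite-dimensionality is again decisive: to show $T+T^\dagger$ is invertible it suffices to show it is injective, i.e., $\N(T)\cap\N(T^\dagger)\subseteq\N(T+T^\dagger)$ has trivial intersection — if $(T+T^\dagger)x=0$, premultiplying by $T^\dagger$ and by $T$ (using $T^\dagger TT^\dagger=T^\dagger$, $TT^\dagger T=T$, and $\R(T^\dagger T)\cap\R(TT^\dagger)$ analysis) isolates $Tx$ and $T^\dagger x$ into $\R(T)\cap\R(T^\dagger)=0$, forcing $x\in\N(T)\cap\N(T^\dagger)=0$. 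Then Proposition \ref{cor8}(ii)$\Rightarrow$(i) gives that $T$ is co-EP. The converse (i)$\Rightarrow$(ii) is immediate from Proposition \ref{cor8}(ii) plus Remark \ref{rem10}, which supplies $\N(T)\cap\N(T^\dagger)=0$ once one knows $2\dim\N(T)=\dim\X$ and $\dim\N(T)=\dim\N(T^\dagger)$ — or, more directly, co-EP symmetry: applying Proposition \ref{cor8} to $T^\dagger$ in place of $T$ (whose Moore-Penrose inverse is $T$) gives $\R(T^\dagger)\cap\R(T)=0$, and the null-space condition follows from the $I-TT^\dagger$, $I-T^\dagger T$ form of Proposition \ref{cor8}(vii) together with $\N(T)=\R(I-T^\dagger T)$, $\N(T^\dagger)=\R(I-TT^\dagger)$.

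The main obstacle I anticipate is the injectivity-to-invertibility step and, relatedly, being careful that none of the rank identities of Remark \ref{rem10} are used \emph{before} they have been established from the hypothesis at hand; the argument must be arranged so that each of (i), (ii), (iii) independently forces $2\dim\R(T)=\dim\X=2\dim\N(T)$, after which everything is a short dimension count plus a citation to Proposition \ref{cor8}.
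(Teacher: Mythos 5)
Your proof is essentially correct, and it reaches the result by a slightly different arrangement than the paper. The paper proves (i)$\Rightarrow$(ii) by combining Proposition \ref{cor8} (for $\R(T)\cap\R(T^\dag)=0$) with the invertibility of $T-T^\dag$ (for $\N(T)\cap\N(T^\dag)=0$); it proves (ii)$\Rightarrow$(i) by showing directly that $TT^\dag-T^\dag T$ is injective, hence invertible in finite dimension; and it handles (iii) by citing an external result of Koliha--Rako\v cevi\'c for (i)$\Rightarrow$(iii) and a dimension count for (iii)$\Rightarrow$(ii). You instead prove (ii)$\Leftrightarrow$(iii) purely by dimension bookkeeping in both directions (which is correct and has the advantage of not needing the external citation), and for (ii)$\Rightarrow$(i) you route through the invertibility of $T+T^\dag$ and Proposition \ref{cor8}(ii) rather than the injectivity of $TT^\dag-T^\dag T$; both rely on the same finite-dimensional ``injective implies invertible'' step, and your injectivity argument can be stated more simply than you describe: if $(T+T^\dag)x=0$ then $Tx=-T^\dag x\in\R(T)\cap\R(T^\dag)=0$, so $x\in\N(T)\cap\N(T^\dag)=0$, with no premultiplications needed. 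One caution: in your (i)$\Rightarrow$(ii) step, the parenthetical claim that Remark \ref{rem10} alone ``supplies $\N(T)\cap\N(T^\dag)=0$ once one knows $2\dim\N(T)=\dim\X$ and $\dim\N(T)=\dim\N(T^\dag)$'' is false as stated --- two subspaces of half the dimension can intersect nontrivially (they may even coincide) --- so that route does not work; however, the alternative you give in the same sentence, namely Proposition \ref{cor8}(vii) together with $\N(T)=\R(I-T^\dag T)$ and $\N(T^\dag)=\R(I-TT^\dag)$, is valid (as is the paper's use of the invertibility of $T-T^\dag$), so the overall argument goes through once you discard the dimension-only shortcut.
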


\begin{proof}
(i) $\Rightarrow$ (ii). According to Proposition \ref{cor8},
$\R(T)\cap \R(T^{\dag})=0$.
In addition, since $T-T^{\dag}\in \L(\X)$ is invertible,
$\N(T) \cap \N(T^{\dag})=0$.\par

\noindent (ii) $\Rightarrow$ (i).
Let $x\in \N(TT^{\dag}-T^{\dag}T)$, i.e., $TT^{\dag}(x)=T^{\dag}T(x)$.
Thus, $z=TT^{\dag}(x)=T^{\dag}T(x)\in \R(T)\cap \R(T^{\dag})=0$. As a result,
$x\in \N(T)\cap  \N(T^{\dag})=0$.\par

\noindent (i) $\Rightarrow$ (iii). Apply \cite[Theorem 5.2]{KR}.\par

\noindent (iii) $\Rightarrow$ (ii).
Since $T^{\dag}$ is Moore-Penrose invertible, 
$\dim \N(T)=\dim \N(T^{\dag})$ and $\dim \R(T)=\dim \R(T^{\dag})$.
Let $n=\dim \X$, $r=\dim \R(T)$, $s_1=\dim [\R(T) \cap \R(T^\dag)]$ and
$s_2=\dim [\N(T) \cap \N(T^\dag)]$.
According to the hyotheses,  $n = 2r-s_1$ and $n=2(n-r)-s_2$. Hence
$s_1+s_2=0$, which leads to $s_1=s_2=0$. 
\end{proof}

\section {\sfstp Hermitian co-EP elements}


\noindent Given a unital Banach algebra $\A$, if  $a\in \A$ is co-EP, then according to \cite[Theorem 3.2]{KR},
there exist  $h, k\in \A$, $h=h^2$ and $k=k^2$, such that
$h\A=a\A$, $(1-h)\A=a^{\dag}\A$, $\A k=\A a^{\dag}$ and $\A (1-k)=\A a$ (the idempotents $h$
and $k$ are unique).
Next a particular class of co-EP elements, the ones that their idempotents $h$ and $k$ are hermitian,
will be introduced.\par

\begin{definition} \label{def6} Let $\A$ be a unital Banach algebra and let $a\in \A$
such that $a$ is co-EP. Then, $a$ will be said hermitian co-EP if the above considered
idempotent $h$ is hermitian.
\end{definition}

To characterize hermitian co-EP Banach algebra elements some preparation is needed.\par

\begin{remark}\label{rema47}\rm  Let $\A$ be a unital Banach algebra and consider $a\in \A$ a Moore-Penrose
invertible element.\par
\noindent (i) Note that $a\in \A$ is hermitian  co-EP if and only if $a^\dagger$ is hermitian co-EP.\par
\noindent  (ii) Suppose that $a\in \A$ is  co-EP and consider the aforementioned idempotents
$h$, $k\in A$. In particular, 
$ha^\dag = 0$, $ha=a$, $a^\dag k = a^\dagger$ and $ak=0$.
Hence $h(aa^\dag - a^\dag a) = aa^\dag$ and 
$(aa^\dag - a^\dag a)k=aa^\dagger$, which implies that
$h=aa^\dagger(aa^\dag - a^\dag a)^{-1}$ and
$k=(aa^\dag - a^\dag a)^{-1}aa^\dagger$. 
\end{remark}

\indent In the following theorem hermitian co-EP Banach algebra elements will be characterized.
Compare with \cite[Theorem 2.9 and Corollary 2.10]{BR1}, \cite[Section 3]{BR2}
and \cite[Section 4]{BLR}. Note that given a unital Banach algebra $\A$, $a\in \A$ will be said to be \it bi-EP\rm,
if the idempotents $aa^\dag$ and $a^\dag a$ commute, see for example \cite{CM}.\par

\begin{theorem}\label{thm5} Let $\A$ be a unital Banach algebra and consider $a \in \A^\dag$.
Then, the following statements are equivalent.\par
\noindent {\rm (i)} $a$ is co-EP and $h$ is hermitian;\par
\noindent {\rm (ii)} $a$ is co-EP and $h=aa^{\dag}$;\par
\noindent {\rm (iii)} $a^{\dag}a +aa^{\dag}=1$;\par
\noindent {\rm (iv)} $a\A=a^{-1}(0)$;\par
\noindent {\rm (v)} $\A a=a_{-1}(0)$;\par
\noindent {\rm (vi)} $a$ is co-EP and $k=aa^{\dag}$;\par
\noindent {\rm (vii)} $a$ is co-EP and $k$ is hermitian;\par
\noindent {\rm (viii)} $a$ is co-EP and $k=h$;\par
\noindent {\rm (ix)} $a$ is co-EP and $a^2=0$;\par
\noindent {\rm (x) } $a^\dag \A = (a^\dag)^{-1}(0)$;\par
\noindent {\rm (xi) } $\A a^\dag = (a^\dag)_{-1}(0)$;\par
\noindent {\rm (xii) } $a$ is co-EP and bi-EP.

\end{theorem}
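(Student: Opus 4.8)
The plan is to establish a cycle of implications that threads all twelve conditions together, using condition (iii), namely $a^\dag a + aa^\dag = 1$, as the central hub. The observation driving everything is that (iii) says precisely that $1 - aa^\dag = a^\dag a$, equivalently $1 - a^\dag a = aa^\dag$; recalling the standard identities $(1-aa^\dag)\A = (aa^\dag)^{-1}(0) = (a^\dag)^{-1}(0)$ and $a^{-1}(0) = (a^\dag a)^{-1}(0) = (1 - a^\dag a)\A$, one sees immediately that (iii) forces $a\A = (1 - aa^\dag)\A\cdot$(adjust) $= a^\dag a\A$ and dually. So I would first prove the block of equivalences (iii) $\Leftrightarrow$ (iv) $\Leftrightarrow$ (v) $\Leftrightarrow$ (x) $\Leftrightarrow$ (xi) $\Leftrightarrow$ (ix) by pure idempotent bookkeeping: for instance, $a\A = a^{-1}(0)$ is the same as $aa^\dag\A = (a^\dag a)^{-1}(0)$, i.e. $aa^\dag \A = (1 - a^\dag a)\A$; since $aa^\dag$ and $1 - a^\dag a$ are idempotents, equality of their ranges together with knowing both are the relevant complements yields $aa^\dag = 1 - a^\dag a$. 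The equivalence with (ix), $a$ co-EP and $a^2 = 0$: from (iii) one gets $a = a(a^\dag a + aa^\dag) = a a^\dag a + a^2 a^\dag = a + a^2 a^\dag$, so $a^2 a^\dag = 0$, hence $a^2 = a^2 a^\dag a = 0$; and one must separately check co-EP, which follows because $aa^\dag - a^\dag a = aa^\dag - (1 - aa^\dag) = 2aa^\dag - 1$ is its own inverse (as $(2p-1)^2 = 1$ for any idempotent $p$). Conversely if $a$ is co-EP with $a^2 = 0$, then $a^\dag = a^\dag a a^\dag$ forces (using $a^2=0$ and the hermitian conditions on $aa^\dag$, $a^\dag a$) the relation $aa^\dag \cdot a^\dag a = 0$ and $a^\dag a \cdot a a^\dag = 0$, and combined with invertibility of $aa^\dag - a^\dag a$ one recovers $aa^\dag + a^\dag a = 1$.

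Next I would connect this hub to the conditions mentioning the canonical idempotents $h$ and $k$. By Remark \ref{rema47}(ii), when $a$ is co-EP we have $h = aa^\dag(aa^\dag - a^\dag a)^{-1}$ and $k = (aa^\dag - a^\dag a)^{-1}aa^\dag$. If additionally (iii) holds, then $aa^\dag - a^\dag a = 2aa^\dag - 1$, whose inverse is itself, so $h = aa^\dag(2aa^\dag - 1) = 2aa^\dag - aa^\dag = aa^\dag$, giving (ii); the same computation gives $k = aa^\dag$, i.e. (vi), and hence $k = h$, i.e. (viii). Since $aa^\dag$ is hermitian by definition of the Moore–Penrose inverse, (ii) immediately yields (i), and likewise (vi) $\Rightarrow$ (vii). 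For the reverse directions, suppose $a$ is co-EP and $h$ is hermitian (condition (i)). Here the key point is that $h$ and $1 - h = a^\dag\A$-projection are both hermitian, while $aa^\dag$ is hermitian with $h\cdot aa^\dag = aa^\dag$ (since $ha = a$ implies $haa^\dag = aa^\dag$) and $aa^\dag \cdot h$: one shows $aa^\dag h = aa^\dag$ too, so $aa^\dag$ and $h$ are commuting idempotents with $aa^\dag \leq h$; a hermitian idempotent is determined by its range, and I would argue that $\R(h) = h\A = a\A = aa^\dag\A = \R(aa^\dag)$ forces $h = aa^\dag$ — this uses the uniqueness of a hermitian idempotent with prescribed range, which follows from the fact that $\H$ is a real-linear subspace and two hermitian idempotents with the same range and complementary to the same... actually cleaner: $h - aa^\dag$ is a difference of commuting idempotents with the same range, hence nilpotent, hence (being also a hermitian element, as $h$ and $aa^\dag$ are both hermitian) zero, since a nilpotent hermitian element is $0$. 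That gives (i) $\Rightarrow$ (ii), and symmetrically (vii) $\Rightarrow$ (vi), while (viii) $\Rightarrow$ (ii) is similar using that $k = h$ together with the explicit formulas forces $aa^\dag(aa^\dag - a^\dag a)^{-1} = (aa^\dag - a^\dag a)^{-1}aa^\dag$, which I can manipulate to $aa^\dag$ commuting with $aa^\dag - a^\dag a$, hence $aa^\dag$ commutes with $a^\dag a$, i.e. $a$ is bi-EP — routing through (xii).

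Finally, for (xii): if $a$ is co-EP and bi-EP, then $aa^\dag$ and $a^\dag a$ are commuting idempotents whose difference $aa^\dag - a^\dag a$ is invertible; but the difference of two commuting idempotents $p, q$ satisfies $(p - q)^3 = p - q$ when... more precisely $(p-q)^2 = p + q - 2pq$ is an idempotent when $pq = qp$ (it equals $(p-q)^2$ which one checks is idempotent), so $(p-q)^2$ is an invertible idempotent, hence $(p-q)^2 = 1$, i.e. $p + q - 2pq = 1$; but also $(p-q)pq = pq - pq = 0$... wait, $(p-q)pq$: with commuting idempotents $pq$ is an idempotent and $(p+q-2pq)pq = pq + pq - 2pq = 0$, and invertibility of $p+q-2pq$ forces $pq = 0$, whence $p + q = 1$, which is exactly (iii). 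So (xii) $\Rightarrow$ (iii), and (ii) $\Rightarrow$ (xii) is immediate since $h = aa^\dag$ commuting with everything in particular lets one deduce $aa^\dag$ commutes with $a^\dag a$ — indeed once we know (ii)$\Leftrightarrow$(iii) we get $a^\dag a = 1 - aa^\dag$ which trivially commutes with $aa^\dag$. Assembling: (iii) is equivalent to each of (iv), (v), (ix), (x), (xi) by idempotent algebra; (iii) $\Rightarrow$ (ii) $\Rightarrow$ (i) $\Rightarrow$ (ii), (iii) $\Rightarrow$ (vi) $\Rightarrow$ (vii) $\Rightarrow$ (vi), (iii) $\Rightarrow$ (viii), (iii) $\Leftrightarrow$ (xii), and each of (ii), (i), (vi), (vii), (viii), (xii) feeds back to (iii) via the "commuting idempotents with invertible difference" lemma. \textbf{The main obstacle} I anticipate is the rigorous argument that a co-EP element with $h$ hermitian must have $h = aa^\dag$ — i.e. extracting $h = aa^\dag$ from equality of ranges plus hermiticity; the clean route is the fact that a hermitian element of a Banach algebra that is nilpotent (indeed, that has real spectrum contained in $\{0\}$, which a nilpotent has) must be zero, so that $h - aa^\dag$, being hermitian and nilpotent, vanishes — but one must be careful to first establish that $h$ and $aa^\dag$ commute, for which I would use $ha = a \Rightarrow haa^\dag = aa^\dag$ and the reversed identity obtained from $a^\dag$ being co-EP as well (Remark \ref{rema47}(i)), or alternatively pass through condition (xii).
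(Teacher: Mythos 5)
Your overall architecture---using (iii) as the hub, the explicit formulas $h=aa^\dag(aa^\dag-a^\dag a)^{-1}$ and $k=(aa^\dag-a^\dag a)^{-1}aa^\dag$ from Remark \ref{rema47}(ii), and the principle that a hermitian nilpotent element is zero---can be made to work, and your treatment of (xii) (commuting idempotents with invertible difference force $aa^\dag+a^\dag a=1$) is a nice alternative to the paper's. However, the ``first block'' is where the analytic content of the theorem lives, and your justification there is wrong as stated: equality of the ranges of two idempotents does \emph{not} imply equality of the idempotents. From $a\A=a^{-1}(0)$, i.e. $aa^\dag\A=(1-a^\dag a)\A$, pure idempotent bookkeeping only yields the absorption identities $aa^\dag(1-a^\dag a)=1-a^\dag a$ and $(1-a^\dag a)aa^\dag=aa^\dag$, not $aa^\dag=1-a^\dag a$. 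Indeed, (iv)$\Rightarrow$(iii) fails at the level of normalized generalized inverses: for $a=\left(\begin{smallmatrix}0&1\\0&0\end{smallmatrix}\right)$ and $b=\left(\begin{smallmatrix}0&0\\1&t\end{smallmatrix}\right)$ with $t\neq 0$ one has $a=aba$, $b=bab$ and $a\A=a^{-1}(0)$, yet $ab+ba\neq 1$; so the hermiticity of $aa^\dag$ and $a^\dag a$ must be invoked precisely at this step. The same gloss occurs in your (ix)$\Rightarrow$(iii): from $a^2=0$ only $(a^\dag a)(aa^\dag)=a^\dag a^2a^\dag=0$ is automatic, while the vanishing of the other product is not ``forced'' by algebra alone.

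The repairs use exactly the tool you introduce later (for hermitian $x$ the spectral radius equals the norm, so a hermitian square-zero element is $0$). The absorption identities give $\bigl(aa^\dag-(1-a^\dag a)\bigr)^2=0$, and this difference is hermitian, hence zero; that is (iv)$\Rightarrow$(iii), and (v), (x), (xi) are handled symmetrically. For (ix)$\Rightarrow$(iii): with $p=aa^\dag$, $q=a^\dag a$ and $qp=0$, one checks $(p-q)^2=p+q-pq$ is an idempotent, invertible by co-EP, hence equal to $1$; then $pq=p+q-1$ is hermitian with $(pq)^2=p(qp)q=0$, so $pq=0$ and $p+q=1$. Two further slips: equal ranges give $aa^\dag h=h$ (not $aa^\dag h=aa^\dag$), and no commutation of $h$ with $aa^\dag$ is needed, since $h\,aa^\dag=aa^\dag$ and $aa^\dag h=h$ already yield $(h-aa^\dag)^2=0$; so the obstacle you anticipate dissolves. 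With these corrections your route is sound and runs parallel to the paper's, which instead proves a single cycle (i)$\Rightarrow$(ii)$\Rightarrow\cdots\Rightarrow$(ix)$\Rightarrow$(iv)$\Rightarrow$(i), invokes Palmer's uniqueness theorem \cite{Pa} for hermitian idempotents (applied to $L_h$ and $L_{aa^\dag}$) where you use the hermitian-nilpotent fact, uses Theorem \ref{thm7} to produce co-EP in (v)$\Rightarrow$(vi) and (ix)$\Rightarrow$(iv), and gets (viii)$\Leftrightarrow$(xii) from Remark \ref{rema47}(ii), as you also do.
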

\begin{proof} {\rm (i) } $\Rightarrow$ {\rm (ii)}. Consider $L_h$, $L_{aa^{\dag}}\in 
\L(\A )$. Since $h\A=a\A=aa^{\dag}\A$,
$\R(L_h) = \R(L_{aa^{\dag}})$. 
Now well, since $h$ and $aa^{\dag}$ are hermitian, according to the
proof of \cite[Theorem 5 (ii)]{Bo}, $L_h$ and $L_{aa^{\dag}}$ 
are hermitian idempotents in $\L(\A )$.
Therefore, according to \cite[Theorem 2.2]{Pa}, $h=aa^{\dag}$.\par


\noindent {\rm (ii) } $\Rightarrow$ {\rm (iii)}. 
Since $(1-h)\A=a^{\dag}\A=a^{\dag}a\A$, 
$\R(L_{1-h}) = \R(L_{a^{\dag}a})$,
where $L_{1-h}$, $L_{a^{\dag}a}\in \L(\A )$. 
However, according to the hypothesis and to the proof  of \cite[Theorem 5 (ii)]{Bo},
$L_{1-h}$ and  $L_{a^{\dag}a}$ are hermitian idempotents in 
$\L(\A )$. Consequently, according to \cite[Theorem 2.2]{Pa},
$1-aa^{\dag}= 1-h=a^{\dag}a$.\par

\noindent {\rm (iii) } $\Rightarrow$ {\rm (iv)}. According to the hypothesis,
$$
a\A=aa^{\dag}\A= (1-a^{\dag}a)\A=(a^{\dag}a)^{-1}(0)=a^{-1}(0).
$$

\noindent {\rm (iv) } $\Rightarrow$ {\rm (v)}. Since $aa^{\dag}$ and $a^{\dag}a$ are idempotents and
$$
aa^{\dag}\A=a\A=a^{-1}(0)= (a^{\dag}a)^{-1}(0)=  (1-a^{\dag}a)\A,
$$
it is not dificult to prove that
$$
aa^{\dag}(1-a^{\dag}a) = 1-a^{\dag}a, \hskip.5truecm
(1-a^{\dag}a)aa^{\dag}=aa^{\dag},
$$
which in turn implies that
$$
a_{-1}(0)=(aa^{\dag})_{-1}(0) = 
(1-a^{\dag}a)_{-1}(0)=\A a^{\dag}a=\A a.
$$

\noindent {\rm (v) } $\Rightarrow$ {\rm (vi)}. 
Since $(a a^\dag - 1)a=0$, 
$a a^\dag - 1 \in a_{-1}(0) = \A a$. Hence, there
exists $u \in \A$ such that $ a a^\dag - 1 = ua$, and thus, 
$1 = (-u)a + aa^\dag \in \A a + \A a^\dagger$, which
proves $\A = \A a + \A a^\dagger$. Now it will be proved that  $\A a \cap \A a^\dagger = 0$. 
To this end, note that the hypothesis implies that $a^2=0$. If 
$x \in \A a \cap \A a^\dagger$, then there exist
$y,z \in \A$ such that $x=ya=za^\dagger$. So, $0=ya^2a^\dag = za^\dag a a^\dag = za^\dag = x$.
Thus, $ \A = \A a \oplus \A a^\dagger$.

Using an argument similar to the one in the proof of (iv) $\Rightarrow$ (v),
it is not difficult to prove that  $\A a^\dag a = \A(1 - aa^\dag)$, which
leads to $a^\dag a (1 - aa^\dag) = a^\dag a$ and
$(1 - aa^\dag) a^\dag a = 1 - aa^\dag$.
In particular, $a^{-1}(0) = a\A$. As in the previous paragraph it is possible to prove
that $\A = a\A \oplus a^\dag \A$. By Theorem \ref{thm7}, $a$ is co-EP.	

Since $\A aa^\dag = \A a^{\dag}$, $\A (1-aa^\dag) = a_{-1}(0) = \A a$ and
the idempotent $k$ is unique, $k=aa^{\dag}$.\par

\noindent {\rm (vi) } $\Rightarrow$ {\rm (vii)}. Clear

\noindent {\rm (vii) } $\Rightarrow$ {\rm (viii)}. Since $\A k=\A a^{\dag}=\A aa^{\dag}$,
an argument similar to the one used to prove that statement (i) implies statement (ii)
proves that $k=aa^{\dag}$. In addition, since $k$ is hermitian and 
$\A (1-k)=\A a=\A a^{\dag}a$,
according \cite[Theorem 2.2]{Pa}, 
$1-aa^{\dag}=1-k=a^{\dag}a$, equivalently,
$aa^{\dag}+a^{\dag}a=1$. 
Since $aa^\dag \A = a\A$, $(1 - aa^\dag)\A = a^\dag a \A = a^\dag \A$
and the idempotent  $h$ is unique,  $h=aa^\dagger = k$.
\par

\noindent {\rm (viii) } $\Rightarrow$ {\rm (ix)}.
Since $a \in a\A = h\A = k\A$ and $a \in \A a = \A(1-k)$, there exist
$u,v \in \A$ such that $a=ku$ and $a=v(1-k)$. 
Hence  $a^2 = v(1-k)ku=0$.\par

\noindent {\rm (ix) } $\Rightarrow$ {\rm (iv)}. 
Since $a^2=0$,  $a\A \subseteq a^{-1}(0)$. Let
$x \in a^{-1}(0)$. Since
$a$ is co-EP, from Theorem \ref{thm7}, such $x$ can be written as
$x = au+a^\dag v$. Thus, $0=ax=a^2u+aa^\dag v = aa^\dag v$, hence
$0=a^\dag v$. So, $x=au \in a\A$.\par

\noindent {\rm (iv) } $\Rightarrow$ {\rm (i)}. 
Since (iv) $\Rightarrow$ (vii) $\Rightarrow$ (viii),  $h$ is hermitian. \par

\noindent {\rm (i) } $\Leftrightarrow {\rm (x) }$ (respectively {\rm (xi)}). Since $a$ is hermitian  co-EP if and only if $a^\dagger$ is hermitian co-EP,
the equivalence between statments (i) and (x) (respectively between statments (i) and (xi))
can be derived applying the Theorem to $a^\dagger$  using the equivalence between
statments (i) and (iv) (respectively between statments (i) and (v)).\par

\noindent {\rm (viii) } $\Leftrightarrow {\rm (xii)}$ Note that according to Remark \ref{rema47} (ii),
\begin{eqnarray*}
h =k & \iff &
aa^\dagger(aa^\dag - a^\dag a)^{-1} = (aa^\dag - a^\dag a)^{-1} aa^\dagger \\ 
& \iff &
(aa^\dag - a^\dag a)aa^\dag = aa^\dag (aa^\dag - a^\dag a) \\
& \iff & 
(aa^\dag)(a^\dag a) = (a^\dag a)(aa^\dag).
\end{eqnarray*}\end{proof}

\begin{remark}
{\rm 
Let $\A$ be a unital Banach algebra. 
Recall that an element $a\in \A^{\dag}$ is said EP when $aa^\dag - a^\dag a = 0$ and 
$a$ is said co-EP when $aa^\dag - a^\dag a \in \mathcal{A}^{-1}$. What is more,
according to Theorem \ref{thm5},  $a\in \A$ is hermitian co-EP if  $a^{\dag}a +aa^{\dag}=1$. However,  the situation
$aa^\dag - a^\dag a = 1$ is impossible. 
In fact, assume $aa^\dag - a^\dag a = 1$. By postmultiplying it by $a$ one gets
$a^\dag a^2 = 0$, hence $a^2=0$. By premultiplying $aa^\dag - a^\dag a = 1$ by $a$
one has $a=0$, which is unfeasible in view of $aa^\dag -a^\dag a=1$.
}
\end{remark}


\section {\sfstp  Condition  ($\mathcal{ P}$) and  the perturbations of  co-EP elements}


In this section the condition  ($\mathcal{ P}$) will be introduced and
the perturbations of  co-EP elements  will be studied. The condition ($\mathcal{ P}$)
resembles the condition ($\mathcal{ W}$) studied for perturbations of
Drazin invertible elements (\cite{VY}, \cite{YW}).

\indent Let $a\in \mathcal{ A}$ be Moore-Penrose invertible. The element  $b\in
\mathcal{ A}$ will be said that {\it obeys the condition} ($\mathcal{ P}$) {\it at} $a$ if
$$
b - a = aa^\dag(b - a)a^\dag a\quad \text {and}\quad \|a^\dag(b -
a)\|< 1.$$

\noindent Note  that the condition
$$
b - a = aa^\dag(b - a)a^\dag a\
$$
is equivalent to 
\begin{equation}\label{1.4}
b - a = aa^\dag(b - a)= (b - a)a^\dag a .
\end{equation}
Basic auxiliary results are summarized in the following lemma.


\bigskip
\begin{lemma}\label{lem1.2}
 Let $\A$ be a unital Banach algebra and consider $a \in \mathcal{ A}$, a Moore-Penrose invertible element.
If $b\in \mathcal{ A}$ obeys the condition $\mathcal{ P}$ at $a$. Then

\item {\rm (i)}\quad $b = a(1 + a^\dag(b - a))$;

\item {\rm (ii)}\quad $b = (1 + (b - a)a^\dag)a$;

\item {\rm (iii)}\quad $1 + a^\dag(b - a)$ and
$1 + (b - a)a^\dag$ are invertible, and

\begin{equation}\label{1.5}
(1 + a^\dag(b - a))^{-1}a^\dag = 
a^\dag(1 + (b - a)a^\dag)^{-1}.
\end{equation}
\end{lemma}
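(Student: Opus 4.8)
The plan is to exploit the two multiplicative factorizations $b = a(1 + a^\dagger(b-a))$ and $b = (1 + (b-a)a^\dagger)a$, so I would prove the three items essentially in the order (i), (ii), (iii), since (iii) builds on the first two.

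For (i), I would start from the equivalent form \eqref{1.4} of the condition, namely $b - a = aa^\dagger(b-a) = (b-a)a^\dagger a$. Expanding the right-hand side of (i), $a(1 + a^\dagger(b-a)) = a + aa^\dagger(b-a)$, and by \eqref{1.4} the second summand is exactly $b - a$, so the whole expression collapses to $b$. Item (ii) is the mirror computation: $(1 + (b-a)a^\dagger)a = a + (b-a)a^\dagger a = a + (b - a) = b$, again using \eqref{1.4}. These two steps are routine and short.

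For (iii), the invertibility of $1 + a^\dagger(b-a)$ and of $1 + (b-a)a^\dagger$ follows from the Neumann series: the condition $\mathcal{P}$ gives $\|a^\dagger(b-a)\| < 1$, hence $1 + a^\dagger(b-a) \in \A^{-1}$; and since $(b-a)a^\dagger$ and $a^\dagger(b-a)$ have the same nonzero spectrum (being $xy$ and $yx$ for $x = a^\dagger$, $y = b-a$), or more concretely since $\|(b-a)a^\dagger\|$ need not be small but $1 + (b-a)a^\dagger$ is invertible whenever $1 + a^\dagger(b-a)$ is — one can use the standard identity that $1 + xy$ is invertible iff $1 + yx$ is, with $(1+yx)^{-1} = 1 - y(1+xy)^{-1}x$. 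The remaining equality \eqref{1.5} is then a formal manipulation: from $a^\dagger(1 + (b-a)a^\dagger) = a^\dagger + a^\dagger(b-a)a^\dagger = (1 + a^\dagger(b-a))a^\dagger$, one premultiplies by $(1 + a^\dagger(b-a))^{-1}$ and postmultiplies by $(1 + (b-a)a^\dagger)^{-1}$ to obtain $(1 + a^\dagger(b-a))^{-1}a^\dagger = a^\dagger(1 + (b-a)a^\dagger)^{-1}$.

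I expect the only genuine subtlety is justifying invertibility of $1 + (b-a)a^\dagger$, since condition $\mathcal{P}$ only bounds $\|a^\dagger(b-a)\|$, not $\|(b-a)a^\dagger\|$; the clean way around this is the $1+xy$ versus $1+yx$ trick together with the identity $b - a = aa^\dagger(b-a)a^\dagger a$, which shows $(b-a)a^\dagger = aa^\dagger(b-a)a^\dagger a a^\dagger = a(a^\dagger(b-a))a^\dagger$ is similar to a piece controlled by $a^\dagger(b-a)$. Everything else is direct algebra using \eqref{1.4}.
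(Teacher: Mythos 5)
Your proposal is correct and follows essentially the same route as the paper: items (i) and (ii) by expanding with the identity $b-a=aa^\dag(b-a)=(b-a)a^\dag a$, invertibility of $1+a^\dag(b-a)$ from $\|a^\dag(b-a)\|<1$, invertibility of $1+(b-a)a^\dag$ via the standard $1+xy$ versus $1+yx$ fact (which the paper invokes implicitly with ``and so''), and \eqref{1.5} by the direct computation $a^\dag(1+(b-a)a^\dag)=(1+a^\dag(b-a))a^\dag$. Your extra justifications only spell out what the paper leaves to the reader.
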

\begin{proof}
To prove (i) and (ii) observe that by \eqref{1.4}
$$
b = a + (b - a) = a + aa^\dag(b - a) = a(1 + a^\dag(b - a))
$$
and
$$
b = a + (b - a) = a + (b - a)a^\dag a = (1 + (b - a)a^\dag)a.
$$
Clearly, the condition  $\|a^\dag(b - a)\| < 1$ implies  that 
$1 + a^\dag (b - a)$ is invertible, and so  
$1 + (b - a)a^\dag$ is
invertible. Finally, \eqref{1.5} follows by direct verification.
\end{proof}

Now  the main results of this section  will be proved.


\begin{theorem}\label{thm2.1} 
Let $\A$ be a unital Banach algebra and consider $a \in \mathcal{ A}$, 
a Moore-Penrose invertible element. 
If $b\in \mathcal{ A}$ obeys the condition $\mathcal{ P}$ at
$a$, then
 $b$ is Moore-Penrose invertible, $bb^\dag = aa^\dag$, $b^\dag  b=
 a^\dag a$ and
$b^\dag = (1 + a^\dag(b - a))^{-1}a^\dag
 = a^\dag(1 + (b - a)a^\dag)^{-1}$.
\end{theorem}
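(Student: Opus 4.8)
The plan is to exhibit the element $x = (1 + a^\dag(b-a))^{-1}a^\dag$ as a candidate for $b^\dag$ and verify directly that it is a normalized generalized inverse of $b$ with $bx$ and $xb$ hermitian — in fact with $bx = aa^\dag$ and $xb = a^\dag a$. First I would use Lemma \ref{lem1.2} to record the two factorizations $b = a(1 + a^\dag(b-a)) = (1 + (b-a)a^\dag)a$ and the invertibility of the two perturbation factors, together with the identity \eqref{1.5} which says $x = (1 + a^\dag(b-a))^{-1}a^\dag = a^\dag(1 + (b-a)a^\dag)^{-1}$; this double expression for $x$ is what will let me compute both $bx$ and $xb$ cleanly.

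The core computation is $bx$ and $xb$. Writing $b = (1 + (b-a)a^\dag)a$ and $x = a^\dag(1 + (b-a)a^\dag)^{-1}$, I get
$$
bx = (1 + (b-a)a^\dag)\, a a^\dag\, (1 + (b-a)a^\dag)^{-1}.
$$
Here I would use that $(b-a)a^\dag = aa^\dag(b-a)a^\dag$ by \eqref{1.4}, so $aa^\dag$ commutes with $1 + (b-a)a^\dag$ — indeed $aa^\dag(b-a)a^\dag = (b-a)a^\dag = (b-a)a^\dag \cdot aa^\dag$ using $(b-a)a^\dag = (b-a)a^\dag aa^\dag$, which follows from \eqref{1.4} after right-multiplying $b-a = (b-a)a^\dag a$ by $a^\dag$. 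Once $aa^\dag$ commutes with that factor, $bx = aa^\dag$. Symmetrically, writing $b = a(1 + a^\dag(b-a))$ and $x = (1 + a^\dag(b-a))^{-1}a^\dag$ gives $xb = (1 + a^\dag(b-a))^{-1} a^\dag a (1 + a^\dag(b-a))$, and the analogous commutation ($a^\dag a$ commutes with $1 + a^\dag(b-a)$, again via \eqref{1.4}) yields $xb = a^\dag a$. From $bx = aa^\dag$ and $xb = a^\dag a$ the hermitian conditions are immediate, since $aa^\dag$ and $a^\dag a$ are hermitian by definition of $a^\dag$.

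It then remains to check the two generalized-inverse equations $bxb = b$ and $xbx = x$. For the first, $bxb = aa^\dag b$; using $b = a + aa^\dag(b-a)$ and $aa^\dag a = a$ one gets $aa^\dag b = a + aa^\dag(b-a) = b$. For the second, $xbx = x(a^\dag a)$; writing $x = (1+a^\dag(b-a))^{-1}a^\dag$ and using $a^\dag a a^\dag = a^\dag$ gives $x a^\dag a = (1+a^\dag(b-a))^{-1}a^\dag = x$. By the uniqueness in \cite[Lemma 2.1]{R1}, $b^\dag = x$, and the formula $b^\dag = (1 + a^\dag(b-a))^{-1}a^\dag = a^\dag(1 + (b-a)a^\dag)^{-1}$ together with $bb^\dag = aa^\dag$, $b^\dag b = a^\dag a$ follows. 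I expect the main obstacle to be bookkeeping the idempotent identities coming out of \eqref{1.4} — specifically establishing the four one-sided relations $aa^\dag(b-a) = b-a$, $(b-a)a^\dag a = b-a$, $(b-a)a^\dag = (b-a)a^\dag aa^\dag$, $a^\dag(b-a) = a^\dag a\, a^\dag(b-a)$ — and keeping straight which factorization of $b$ to pair with which expression for $x$; the algebra itself is routine once those are in hand.
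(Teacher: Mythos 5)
Your proposal is correct and follows essentially the same route as the paper: both verify directly that $\tilde b=(1+a^\dag(b-a))^{-1}a^\dag=a^\dag(1+(b-a)a^\dag)^{-1}$ is a normalized generalized inverse of $b$ with $b\tilde b=aa^\dag$ and $\tilde b b=a^\dag a$, the only difference being that the paper pairs each factorization of $b$ from Lemma \ref{lem1.2} with the matching expression for $\tilde b$, so the inverse factors cancel immediately and no commutation of $aa^\dag$ (resp.\ $a^\dag a$) with $1+(b-a)a^\dag$ (resp.\ $1+a^\dag(b-a)$) is needed, whereas your pairing requires the (correctly established) commutation relations coming from \eqref{1.4}. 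One small slip: in verifying $xbx=x$ you should write $xbx=x(bx)=x\,(aa^\dag)$ rather than $x(a^\dag a)$; with that correction your use of $a^\dag a a^\dag=a^\dag$ indeed yields $x\,aa^\dag=x$, and the rest of the argument stands.
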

\begin{proof}
By Lemma \ref{lem1.2} (iii), $1 + a^\dag
(b - a)$ and $1 + (b - a)a^\dag$ are invertible and
$$
(1 + a^\dag(b - a))^{-1}a^\dag = 
a^\dag(1 + (b - a)a^\dag)^{-1}.
$$
Set $\tilde b = (1 + a^\dag(b - a))^{-1}a^\dag = 
a^\dag(1 + (b - a)a^\dag)^{-1}$. 
Then, $b$ is Moore-Penrose invertible and
that $b^\dag = \tilde b$.  In fact, by Lemma \ref{lem1.2} (i),
\begin{equation}\label{2.2}
b\tilde b =  a(1 + a^\dag(b - a))
(1 + a^\dag(b - a))^{-1}a^\dag =
aa^\dag\end{equation} and by Lemma \ref{lem1.2} (ii),
\begin{equation}\label{2.3} 
\tilde bb =  a^\dag(1 + (b - a)a^\dag)^{-1}
(1 + (b - a)a^\dag)a = a^\dag a.
\end{equation}
 Hence, by \eqref{2.2} and Lemma \ref{lem1.2} (i),
\begin{equation}\label{2.4}
b\tilde b b= (aa^\dag)b = 
(aa^\dag a)(1 + a^\dag(b - a)) = 
a(1 + a^\dag(b - a))=b.
\end{equation} 
Furthermore,
 by \eqref{2.3},
\begin{equation}\label{2.5p}
\tilde b b\tilde b= (a^\dag a)a^\dag(1 + (b - a)a^\dag)^{-1} =
(a^\dag  a a^\dag)(1 + (b - a)a^\dag)^{-1} = 
a^\dag (1 + (b - a)a^\dag)^{-1} = \tilde b.
\end{equation}
 Finally, by \eqref{2.2}, \eqref{2.3}, \eqref{2.4} and \eqref{2.5p},
 $b$ is Moore-Penrose invertible with
 $b^\dag = \tilde b$.
\end{proof}

The following corollary consists in a direct application of Theorem \ref{thm2.1}.\par
\bigskip
\begin{corollary}\label{ 2.1} Let $\A$ be a unital Banach algebra and consider $a \in \mathcal A^{EP}_{co}$. If $b\in
\mathcal{ A}$ obeys the condition $\mathcal{ P}$ at $a$, then $b\in \mathcal
A^{EP}_{co}$, $bb^\dag = aa^\dag$ and $b^\dag b = a^\dag a$.
\end{corollary}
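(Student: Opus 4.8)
The plan is to combine Theorem \ref{thm2.1} with the invertibility characterization of co-EP elements already established. The element $a$ is co-EP, so by Definition \ref{def3} we have $aa^\dagger - a^\dagger a \in \A^{-1}$. Since $b$ obeys condition $\mathcal{P}$ at $a$, Theorem \ref{thm2.1} immediately yields that $b$ is Moore-Penrose invertible with $bb^\dagger = aa^\dagger$ and $b^\dagger b = a^\dagger a$. The point is then that $bb^\dagger - b^\dagger b = aa^\dagger - a^\dagger a$, which is invertible by hypothesis, so $b$ is co-EP by definition.

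First I would invoke Theorem \ref{thm2.1}: because $a \in \A^{EP}_{co} \subseteq \A^\dagger$ and $b$ obeys $\mathcal{P}$ at $a$, the element $b$ is Moore-Penrose invertible and satisfies $bb^\dagger = aa^\dagger$, $b^\dagger b = a^\dagger a$. This already delivers the two displayed identities claimed in the corollary. Second, I would subtract these two equalities to obtain $bb^\dagger - b^\dagger b = aa^\dagger - a^\dagger a$. Third, since $a$ is co-EP, the right-hand side lies in $\A^{-1}$, hence so does $bb^\dagger - b^\dagger b$; by Definition \ref{def3} this says precisely that $b \in \A^{EP}_{co}$.

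I do not anticipate any real obstacle here: the corollary is essentially a restatement of Theorem \ref{thm2.1} together with the observation that the co-EP property depends on $a$ only through the pair of idempotents $aa^\dagger$ and $a^\dagger a$, both of which are preserved under a $\mathcal{P}$-perturbation. If anything, the only point worth stating explicitly is that $b$ being Moore-Penrose invertible is needed before the expression $bb^\dagger - b^\dagger b$ even makes sense, and this is guaranteed by Theorem \ref{thm2.1}. Thus the proof reduces to: apply Theorem \ref{thm2.1}, note $bb^\dagger - b^\dagger b = aa^\dagger - a^\dagger a \in \A^{-1}$, and conclude.
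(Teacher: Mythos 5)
Your proof is correct and matches the paper's approach exactly: the paper states this corollary as a direct application of Theorem \ref{thm2.1}, which is precisely your argument of obtaining $bb^\dag = aa^\dag$ and $b^\dag b = a^\dag a$ and then noting $bb^\dag - b^\dag b = aa^\dag - a^\dag a \in \A^{-1}$.
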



\begin{corollary}\label{ 2.2} Let $\A$ be a unital Banach algebra and consider $a \in \mathcal{ A}$, a Moore-Penrose invertible element.
If $b\in \mathcal{ A}$ obeys the condition $\mathcal{ P}$ at $a$, then
 $b$ is Moore-Penrose invertible and 
\begin{equation}\label{2.5}
\dfrac {\|b^\dag - a^\dag\|}{\|a^\dag\|} \le \dfrac {\|a^\dag(b -
a)\|} {1 - \|a^\dag(b - a)\|}.
\end{equation}
 \end{corollary}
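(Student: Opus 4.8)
The plan is to read the explicit formula for $b^\dagger$ off Theorem \ref{thm2.1} and then estimate $b^\dagger - a^\dagger$ by a Neumann-series argument. By Theorem \ref{thm2.1}, $b$ is Moore-Penrose invertible, so only the inequality \eqref{2.5} requires proof. Writing $x := a^\dagger(b-a)$, Theorem \ref{thm2.1} gives $b^\dagger = (1+x)^{-1}a^\dagger$, hence
$$
b^\dagger - a^\dagger = \bigl[(1+x)^{-1} - 1\bigr]a^\dagger .
$$

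The key algebraic identity is $(1+x)^{-1} - 1 = -x(1+x)^{-1}$, which is legitimate because $1+x$ is invertible; this is exactly Lemma \ref{lem1.2}(iii), and it is guaranteed by the bound $\|x\| = \|a^\dagger(b-a)\| < 1$ that is part of the condition $(\mathcal P)$. Substituting, one obtains
$$
b^\dagger - a^\dagger = -x(1+x)^{-1}a^\dagger ,
$$
so, by submultiplicativity of the norm,
$$
\|b^\dagger - a^\dagger\| \le \|x\|\,\|(1+x)^{-1}\|\,\|a^\dagger\| .
$$

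Finally I would invoke the standard Neumann estimate $\|(1+x)^{-1}\| \le (1-\|x\|)^{-1}$, valid since $\|x\| < 1$ (equivalently, bound $\|(1+x)^{-1}-1\| = \|\sum_{n\ge 1}(-x)^n\| \le \sum_{n\ge 1}\|x\|^n = \|x\|/(1-\|x\|)$ directly). Combining this with the previous display yields $\|b^\dagger - a^\dagger\| \le \|a^\dagger\|\,\|x\|/(1-\|x\|)$; dividing by $\|a^\dagger\|$ and recalling $x = a^\dagger(b-a)$ gives \eqref{2.5}. (If $a^\dagger = 0$, then $a = aa^\dagger a = 0$, so $b-a = aa^\dagger(b-a) = 0$ and $b = 0$, $b^\dagger = 0$, and the estimate is vacuous; thus one may assume $\|a^\dagger\| \ne 0$.) There is no real obstacle here; the only point requiring a little care is to apply the identity $(1+x)^{-1}-1 = -x(1+x)^{-1}$ on the correct side, so that the factor $x$, whose norm we control, ends up on the left while $a^\dagger$ stays on the right.
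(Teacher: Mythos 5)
Your proof is correct and follows essentially the same route as the paper: take the formula $b^\dagger=(1+a^\dagger(b-a))^{-1}a^\dagger$ from Theorem \ref{thm2.1}, write $b^\dagger-a^\dagger=[(1+x)^{-1}-1]a^\dagger$ with $x=a^\dagger(b-a)$, and apply submultiplicativity together with the Neumann bound $\|(1+x)^{-1}\|\le(1-\|x\|)^{-1}$. The only cosmetic difference is that you use $(1+x)^{-1}-1=-x(1+x)^{-1}$ while the paper uses $-(1+x)^{-1}x$; since $x$ and $(1+x)^{-1}$ commute, the side on which $x$ appears is immaterial, so the caution in your closing remark is unnecessary (and your observation about the degenerate case $a^\dagger=0$ is a harmless extra).
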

\begin{proof}
By Theorem \ref{thm2.1} 
\begin{align*}
b^\dag - a^\dag &= (1 + a^\dag(b - a))^{-1}a^\dag - a^\dag\\
&= [(1 + a^\dag(b - a))^{-1} - 1]a^\dag\\
 &=- (1 + a^\dag(b - a))^{-1}(a^\dag(b - a))a^\dag.\\
\end{align*}
Hence
\begin{align*}
\|b^\dag - a^\dag\| &\le \|(1 + a^\dag(b - a))^{-1}\| 
\|a^\dag(b - a)\| \|a^\dag\|\\
&\le \dfrac {\|a^\dag(b - a)\|} {1 - \|a^\dag(b - a)\|} \|a^\dag\|,
\end{align*}
and \eqref{2.5} holds. 
\end{proof}

\begin{corollary}\label{cor 2.3} Let $\A$ be a unital Banach algebra and consider $a \in \mathcal{ A}$, a Moore-Penrose invertible element.
If  $b\in \mathcal{ A}$ obeys the condition $\mathcal{ P}$ at $a$, then
 $b\in \A$ is Moore-Penrose invertible and
\begin{eqnarray}\label{2.8}
\dfrac {\|a^\dag\|} {1 +  \|a^\dag(b - a)\|} \le \|b^\dag\| \le
\dfrac {\|a^\dag\|} {1 - \|a^\dag(b - a)\|}.
\end{eqnarray}
 \end{corollary}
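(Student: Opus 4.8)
The plan is to deduce Corollary \ref{cor 2.3} directly from the identities established in Theorem \ref{thm2.1}, exactly as Corollary \ref{ 2.2} was deduced. First I would invoke Theorem \ref{thm2.1} to get that $b$ is Moore-Penrose invertible with $b^\dag = (1 + a^\dag(b - a))^{-1}a^\dag$. For the upper bound I would write $b^\dag = (1 + a^\dag(b-a))^{-1}a^\dag$ and estimate $\|b^\dag\| \le \|(1 + a^\dag(b-a))^{-1}\|\,\|a^\dag\|$; since $\|a^\dag(b-a)\| < 1$, the Neumann series gives $\|(1 + a^\dag(b-a))^{-1}\| \le (1 - \|a^\dag(b-a)\|)^{-1}$, which yields the right-hand inequality of \eqref{2.8}.

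For the lower bound, the natural move is to express $a^\dag$ in terms of $b^\dag$ and run the same Neumann-series estimate in the other direction. From $b^\dag = (1 + a^\dag(b-a))^{-1}a^\dag$ I would get $a^\dag = (1 + a^\dag(b-a))b^\dag$, hence $\|a^\dag\| \le \|1 + a^\dag(b-a)\|\,\|b^\dag\| \le (1 + \|a^\dag(b-a)\|)\,\|b^\dag\|$, which rearranges to the left-hand inequality of \eqref{2.8}. Combining the two estimates gives the displayed chain of inequalities.

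The only mild subtlety — and I would not call it a real obstacle — is checking that the identity $a^\dag = (1 + a^\dag(b-a))b^\dag$ is legitimate, i.e. that one may cancel the invertible factor: this is immediate since $(1 + a^\dag(b-a))$ is invertible by Lemma \ref{lem1.2}(iii), so multiplying $b^\dag = (1 + a^\dag(b-a))^{-1}a^\dag$ on the left by $(1 + a^\dag(b-a))$ is valid. Everything else is a routine application of submultiplicativity of the norm and the standard bound on the norm of the inverse of $1 + x$ when $\|x\| < 1$. I would present it as a short two-estimate proof with no case analysis.

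\begin{proof}
By Theorem \ref{thm2.1}, $b$ is Moore-Penrose invertible and
$b^\dag = (1 + a^\dag(b - a))^{-1}a^\dag$. Since $\|a^\dag(b - a)\| < 1$,
the element $1 + a^\dag(b-a)$ is invertible with
$\|(1 + a^\dag(b - a))^{-1}\| \le (1 - \|a^\dag(b - a)\|)^{-1}$. Hence
$$
\|b^\dag\| \le \|(1 + a^\dag(b - a))^{-1}\|\,\|a^\dag\| \le
\dfrac{\|a^\dag\|}{1 - \|a^\dag(b - a)\|},
$$
which is the right-hand inequality in \eqref{2.8}. On the other hand, multiplying
$b^\dag = (1 + a^\dag(b - a))^{-1}a^\dag$ on the left by the invertible element
$1 + a^\dag(b-a)$ yields $a^\dag = (1 + a^\dag(b - a))b^\dag$, so that
$$
\|a^\dag\| \le \|1 + a^\dag(b - a)\|\,\|b^\dag\| \le
(1 + \|a^\dag(b - a)\|)\,\|b^\dag\|,
$$
which rearranges to $\|b^\dag\| \ge \|a^\dag\|/(1 + \|a^\dag(b - a)\|)$, the
left-hand inequality in \eqref{2.8}. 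This completes the proof.
\end{proof}
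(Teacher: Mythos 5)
Your proposal is correct and follows essentially the same route as the paper: both proofs invoke Theorem \ref{thm2.1} to write $b^\dag = (1 + a^\dag(b - a))^{-1}a^\dag$, obtain the upper bound from the Neumann-series estimate $\|(1 + a^\dag(b - a))^{-1}\| \le (1 - \|a^\dag(b - a)\|)^{-1}$, and obtain the lower bound from $a^\dag = (1 + a^\dag(b - a))b^\dag$ together with submultiplicativity. No gaps; your remark on the legitimacy of cancelling the invertible factor is a minor addition the paper leaves implicit.
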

\begin{proof}
By Theorem \ref{thm2.1},
\begin{eqnarray*} 
b^\dag =   (1 + a^\dag(b - a))^{-1}a^\dag, 
\end{eqnarray*}
hence
\begin{eqnarray*} 
a^\dag =   (1 + a^\dag(b - a))b^\dag.
\end{eqnarray*}
Now
\begin{eqnarray*}\label{2.11}
\|b^\dag\| \le  \|(1 + a^\dag(b - a))^{-1}\| \|a^\dag\| \le
 \dfrac {\|a^\dag\|}
{1 - \|a^\dag(b - a)\|},
\end{eqnarray*}
\begin{eqnarray*} 
\|a^\dag\| \le  \|(1 + a^\dag(b - a))\| \|b^\dag\| \le 
(1 + \|a^\dag(b - a)\| )\|b^\dag\|, 
\end{eqnarray*}
which proves \eqref{2.8}.
\end{proof}

\begin{corollary}\label{cor 2.4} 
Let $\A$ be a unital Banach algebra and consider $a \in \mathcal{ A}$, a Moore-Penrose invertible element.
If $b\in \mathcal{ A}$ obeys the condition $\mathcal{ P}$ at $a$ and $\|a^\dag (b
- a)\|< 1/2$,
then $b$ is Moore-Penrose invertible and
  $a$
obeys the condition $\mathcal{ P}$ at $b$.
 \end{corollary}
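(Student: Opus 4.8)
The statement is symmetric in spirit: we have that $b$ obeys $(\mathcal P)$ at $a$ with the stronger bound $\|a^\dag(b-a)\| < 1/2$, and we want to deduce that $a$ obeys $(\mathcal P)$ at $b$. Recall that obeying $(\mathcal P)$ at $b$ means two things: the structural identity $a - b = bb^\dag(a-b)b^\dag b$ (equivalently, by \eqref{1.4} applied with roles swapped, $a-b = bb^\dag(a-b) = (a-b)b^\dag b$), and the norm bound $\|b^\dag(a-b)\| < 1$. By Theorem \ref{thm2.1}, $b$ is Moore-Penrose invertible with $bb^\dag = aa^\dag$ and $b^\dag b = a^\dag a$, so the two projections attached to $b$ coincide with those attached to $a$. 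This is the key simplification: the structural condition for $a$ at $b$ becomes $a - b = aa^\dag(a-b)a^\dag a$, which is immediate because $a - b = -(b-a)$ and $b-a = aa^\dag(b-a)a^\dag a$ by hypothesis. So the structural half is essentially free once Theorem \ref{thm2.1} is invoked.

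**The norm estimate.** The real content is showing $\|b^\dag(a-b)\| < 1$, i.e. $\|b^\dag(b-a)\| < 1$. Here I would use the formula $b^\dag = (1 + a^\dag(b-a))^{-1}a^\dag$ from Theorem \ref{thm2.1}. Write $c = a^\dag(b-a)$, so $\|c\| < 1/2$ and $b^\dag = (1+c)^{-1}a^\dag$. Then
$$
b^\dag(b-a) = (1+c)^{-1}a^\dag(b-a) = (1+c)^{-1}c.
$$
Now estimate $\|(1+c)^{-1}c\| \le \|(1+c)^{-1}\|\,\|c\| \le \frac{\|c\|}{1-\|c\|}$, and the hypothesis $\|c\| < 1/2$ gives $\frac{\|c\|}{1-\|c\|} < 1$. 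That is exactly the bound needed. (One could even sharpen: $\|(1+c)^{-1}c\| = \|1 - (1+c)^{-1}\| \le \|c\|/(1-\|c\|)$, same conclusion.)

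**Assembling.** So the proof is short: first apply Theorem \ref{thm2.1} to get that $b$ is Moore-Penrose invertible with $bb^\dag = aa^\dag$, $b^\dag b = a^\dag a$, and $b^\dag = (1+a^\dag(b-a))^{-1}a^\dag$; then verify the structural identity $a - b = bb^\dag(a-b)b^\dag b$ by rewriting the right side as $aa^\dag(a-b)a^\dag a = -aa^\dag(b-a)a^\dag a = -(b-a) = a-b$ using the hypothesis; finally compute $b^\dag(a-b) = -(1+a^\dag(b-a))^{-1}a^\dag(b-a)$ and bound its norm by $\|a^\dag(b-a)\|/(1-\|a^\dag(b-a)\|) < 1$ using $\|a^\dag(b-a)\| < 1/2$. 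The only place one must be a little careful is making sure the definition of "obeys $(\mathcal P)$ at $b$" is read with $b^\dag$ (not $a^\dag$) in both the structural identity and the norm bound, and that the equivalence \eqref{1.4} is genuinely available with the roles of the base element and perturbation swapped — but \eqref{1.4} is a formal consequence of idempotency of $bb^\dag$ and $b^\dag b$, so it applies verbatim. There is no real obstacle here; the estimate $\|(1+c)^{-1}c\| \le \|c\|/(1-\|c\|)$ together with $\|c\| < 1/2 \Rightarrow \|c\|/(1-\|c\|) < 1$ is the whole point of the hypothesis $\|a^\dag(b-a)\| < 1/2$, and everything else is bookkeeping via Theorem \ref{thm2.1}.
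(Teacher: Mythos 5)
Your proposal is correct and follows essentially the same route as the paper: invoke Theorem \ref{thm2.1} to get $bb^\dag=aa^\dag$, $b^\dag b=a^\dag a$ and $b^\dag=(1+a^\dag(b-a))^{-1}a^\dag$, deduce the structural identity $a-b=bb^\dag(a-b)b^\dag b$ from the projection equalities, and bound $\|b^\dag(a-b)\|$ by $\|a^\dag(b-a)\|/(1-\|a^\dag(b-a)\|)<1$ using the hypothesis $\|a^\dag(b-a)\|<1/2$. The only cosmetic difference is that you simplify $b^\dag(b-a)$ to $(1+c)^{-1}c$ before estimating, whereas the paper estimates the product directly; the inequality used is identical.
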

\begin{proof}
By Theorem \ref{thm2.1}, $bb^\dag =
aa^\dag$ and $b^\dag b= a^\dag a$. Hence $a - b = bb^\dag(a -
b)bb^\dag$. Again, by Theorem \ref{thm2.1}, 
$$\aligned
\|b^\dag(a - b)\| &= \|(1 + a^\dag(b - a))^{-1}a^\dag (a - b)\|\\
&\le \|(1 + a^\dag(b - a))^{-1}\|\, \| a^\dag(a - b)\|\\
& \le \dfrac { \| a^\dag(a - b)\|} {1 - \|a^\dag(b - a)\|}< 1.
\endaligned
$$
This completes the proof.   
\end{proof}


\begin{corollary}\label{col 2.5} Let $\A$ be a unital Banach algebra and consider $a \in \mathcal{ A}$, a 
Moore-Penrose invertible element. If $b\in \mathcal{ A}$ obeys  the condition $\mathcal{ P}$ at $a$
and  $\|a^\dag\| \|b - a\| < 1$, then
 $b$ is Moore-Penrose invertible and 
\begin{eqnarray*} 
\dfrac {\|b^\dag - a^\dag\|}{\|a^\dag\|} \le \dfrac {k_\dag(a) \|b -
a\| / \|a\|} {1 - k_\dag(a) \|(b - a)\| / \|a\|}, 
\end{eqnarray*}
where
$$
k_\dag(a) = \|a\| \|a^\dag\|
$$
is defined as the condition number with respect to the Moore-Penrose
inverse.
 \end{corollary}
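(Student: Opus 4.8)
The plan is to obtain this as an immediate consequence of Corollary~\ref{ 2.2}, together with two elementary observations. First I would recall that, since $b$ obeys the condition $\mathcal{P}$ at $a$, Corollary~\ref{ 2.2} already gives that $b$ is Moore-Penrose invertible and that
\begin{equation*}
\frac{\|b^\dag - a^\dag\|}{\|a^\dag\|} \le \frac{\|a^\dag(b - a)\|}{1 - \|a^\dag(b - a)\|}.
\end{equation*}
So the whole task reduces to comparing the right-hand side of this inequality with the bound claimed in the statement.

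Next I would use submultiplicativity of the norm to estimate $\|a^\dag(b - a)\| \le \|a^\dag\|\,\|b - a\| = k_\dag(a)\,\|b - a\|/\|a\|$, the last equality being merely the definition of $k_\dag(a)$ (here one tacitly assumes $a \neq 0$, since otherwise the condition $\mathcal{P}$ forces $b = a = 0$ and the quotient is degenerate). Writing $t = \|a^\dag(b - a)\|$ and $s = k_\dag(a)\,\|b - a\|/\|a\| = \|a^\dag\|\,\|b - a\|$, the extra hypothesis $\|a^\dag\|\,\|b - a\| < 1$ says precisely that $s < 1$, and we have just shown $0 \le t \le s < 1$.

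Finally I would invoke monotonicity: the real function $\varphi(x) = x/(1 - x)$ is increasing on $[0,1)$ (its derivative $(1 - x)^{-2}$ is positive there), so from $t \le s < 1$ we get $\varphi(t) \le \varphi(s)$, i.e.
\begin{equation*}
\frac{\|a^\dag(b - a)\|}{1 - \|a^\dag(b - a)\|} \le \frac{k_\dag(a)\,\|b - a\|/\|a\|}{1 - k_\dag(a)\,\|b - a\|/\|a\|},
\end{equation*}
and chaining this with the estimate from Corollary~\ref{ 2.2} yields the asserted inequality. There is no genuine obstacle in this argument; the only points needing care are that the hypothesis $\|a^\dag\|\,\|b-a\|<1$ is exactly what makes the denominator on the right positive (so that $\varphi$ may legitimately be applied and the bound is meaningful), and the harmless exclusion of the trivial case $a = 0$. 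One could equally well argue directly from Theorem~\ref{thm2.1}, using $b^\dag - a^\dag = -(1 + a^\dag(b-a))^{-1}(a^\dag(b-a))a^\dag$ and the Neumann-series bound $\|(1 + a^\dag(b-a))^{-1}\| \le (1 - \|a^\dag(b-a)\|)^{-1} \le (1 - s)^{-1}$, but routing through Corollary~\ref{ 2.2} is the shortest path.
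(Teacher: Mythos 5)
Your argument is correct and is exactly the intended derivation: the paper states this corollary without proof precisely because it follows immediately from Corollary~\ref{ 2.2} via the estimate $\|a^\dag(b-a)\|\le\|a^\dag\|\,\|b-a\|=k_\dag(a)\|b-a\|/\|a\|$ and the monotonicity of $x\mapsto x/(1-x)$ on $[0,1)$, which is what you do. Your remarks on the role of the hypothesis $\|a^\dag\|\,\|b-a\|<1$ and the degenerate case $a=0$ are accurate and harmless.
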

\bigskip

To finish this section, some algebraic properties of
the Moore-Penrose inverse will be considered. Recall that  if
$a, b\in \mathcal{ A}$ are Moore-Penrose invertible, then $ab$ is not
necessary Moore-Penrose invertible, and if $ab$ is Moore-Penrose
invertible, then in general $(ab)^\dag \ne b^\dag a^\dag$. In the
next proposition, it will be shown that if $b$ obeys the condition $\mathcal{ P}$
at $a$, then it is possible to give  precise characterizations.

\begin{proposition}\label{pro 2.6} Let $\A$ be a unital Banach algebra and consider $a \in \mathcal{ A}$, a 
Moore-Penrose invertible element. If  $b\in \mathcal{ P}$ obey  the condition $\mathcal{ P}$ at
$a$, then the following statements hold.

\item {\rm (i)}\quad $ab^\dag$ has Moore-Penrose inverse and
$(ab^\dag)^\dag = ba^\dag$; moreover $a\in \mathcal{ A}_{co}^{EP}$ implies
$ab^\dag\in \mathcal{ A}_{co}^{EP}$.

\item {\rm (ii)}\quad $b^\dag a$ has Moore-Penrose inverse and
$(b^\dag a)^\dag = a^\dag b$; moreover $a\in \mathcal{ A}_{co}^{EP}$
implies $b^\dag a\in \mathcal{ A}_{co}^{EP}$.

\item \rm {(iii)}\quad $ba^\dag$ has Moore-Penrose inverse and
$(ba^\dag)^\dag = ab^\dag$; moreover $a\in \mathcal{ A}_{co}^{EP}$ implies
$ba^\dag\in \mathcal{ A}_{co}^{EP}$.

\item {\rm (iv)}\quad $a^\dag b$ has Moore-Penrose inverse and
$(a^\dag b)^\dag = b^\dag a$; moreover $a\in \mathcal{ A}_{co}^{EP}$
implies $a^\dag b\in \mathcal{ A}_{co}^{EP}$.
\end{proposition}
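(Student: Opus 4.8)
The plan is to exploit Theorem~\ref{thm2.1}, which already gives us everything we need about $b^\dag$ when $b$ obeys condition $\mathcal{P}$ at $a$: namely $b^\dag = (1+a^\dag(b-a))^{-1}a^\dag = a^\dag(1+(b-a)a^\dag)^{-1}$, together with $bb^\dag = aa^\dag$ and $b^\dag b = a^\dag a$. First I would record the two identities that make the whole proposition run: since $bb^\dag = aa^\dag$ we get $aa^\dag b = b$ and $b^\dag a a^\dag = b^\dag$ (using $b = aa^\dag(b-a)+a$ and \eqref{1.4}), and similarly $b^\dag b = a^\dag a$ gives $a^\dag a b^\dag = b^\dag$ and $b a^\dag a = b$. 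Also $a a^\dag a = a$ and $a^\dag a a^\dag = a^\dag$ are the defining Moore--Penrose relations. These six "absorption" identities are the engine.

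For part~(i): set $x = ab^\dag$ and $y = ba^\dag$. I would verify the four Penrose-type conditions directly. We have $xyx = ab^\dag b a^\dag a b^\dag = a(a^\dag a)a^\dag(aa^\dag)b^\dag$ — wait, more cleanly, $xyx = ab^\dag\cdot ba^\dag\cdot ab^\dag$; using $b^\dag b = a^\dag a$ this is $a(a^\dag a)(a^\dag a)b^\dag = a a^\dag a b^\dag = ab^\dag = x$, and symmetrically $yxy = y$. Then $xy = ab^\dag b a^\dag = a(a^\dag a)a^\dag = aa^\dag$, which is hermitian since $a$ is Moore--Penrose invertible; and $yx = ba^\dag a b^\dag = b(b^\dag b)b^\dag = bb^\dag$, also hermitian. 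Hence by uniqueness $(ab^\dag)^\dag = ba^\dag$. For the co-EP clause: compute $(ab^\dag)(ab^\dag)^\dag - (ab^\dag)^\dag(ab^\dag) = aa^\dag - bb^\dag = aa^\dag - aa^\dag = 0$? That is wrong — I need to recompute, since $ab^\dag$ being co-EP should be nontrivial. The correct products are $(ab^\dag)(ba^\dag) = aa^\dag$ and $(ba^\dag)(ab^\dag) = bb^\dag = aa^\dag$, so in fact $ab^\dag$ would be EP, not co-EP. This signals that the intended reading must use the genuine inverses $a^\dagger, b^\dagger$ differently, or that the co-EP assertion concerns $a$ itself transferring structure; the safe move is to observe that $\lambda(ab^\dag)+\mu(ab^\dag)^\dag = \lambda ab^\dag + \mu ba^\dag$ and relate its invertibility and the range condition $\R$ back to $a$ via Theorem~\ref{thm7}, using that $b^\dag$ and $a^\dag$ differ by the invertible factor $(1+a^\dag(b-a))^{\pm1}$.

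Parts~(ii), (iii), (iv) are the same computation with the roles of $a\leftrightarrow a^\dag$ and left $\leftrightarrow$ right interchanged: for $b^\dag a$ use $a^\dag b$ as candidate inverse and the absorption identities $b^\dag b = a^\dag a$, $bb^\dag = aa^\dag$ to check $ (b^\dag a)(a^\dag b)(b^\dag a) = b^\dag a$ etc., with $(b^\dag a)(a^\dag b) = b^\dag a a^\dag b = b^\dag b = a^\dag a$ hermitian and $(a^\dag b)(b^\dag a) = a^\dag b b^\dag a = a^\dag a$ hermitian. Part~(iii) is part~(i) applied to $a^\dagger$ (using Remark~\ref{rema47}(i) and that $b^\dag$ obeys $\mathcal P$ at $a^\dagger$ — this needs a short check that the perturbation condition is inherited, which follows from $bb^\dag=aa^\dag$, $b^\dag b=a^\dag a$ and $\|a^\dagger(b-a)\|<1$ controlling $\|a(b^\dag-a^\dag)\|$), and part~(iv) is part~(ii) likewise. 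The main obstacle is precisely the co-EP transfer clauses: one must pin down exactly which invertibility statement ``$a\in\A_{co}^{EP}$ implies $ab^\dag\in\A_{co}^{EP}$'' refers to, then push the invertibility of $aa^\dagger - a^\dagger a$ through the identities $b^\dag = (1+a^\dag(b-a))^{-1}a^\dag$ and $b = a(1+a^\dag(b-a))$ to conclude invertibility of the corresponding difference for $ab^\dagger$, invoking Theorem~\ref{thm7} for the range/complement conditions rather than attempting a bare-hands manipulation.
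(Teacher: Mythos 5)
Your verification of the Moore--Penrose statements is correct and is precisely the paper's own proof: from Theorem \ref{thm2.1} one has $bb^\dag=aa^\dag$ and $b^\dag b=a^\dag a$, and then $(ab^\dag)(ba^\dag)=aa^\dag=(ba^\dag)(ab^\dag)$, $(ab^\dag)(ba^\dag)(ab^\dag)=ab^\dag$, $(ba^\dag)(ab^\dag)(ba^\dag)=ba^\dag$, with $aa^\dag$ hermitian, so $(ab^\dag)^\dag=ba^\dag$; the analogous computation gives (ii), and the paper does exactly this and nothing more (``statement (ii) can be proved similarly''). Two remarks on your treatment of (iii) and (iv): the reduction you sketch is unnecessary, because the four identities just displayed are symmetric in $ab^\dag$ and $ba^\dag$ (they exhibit $ab^\dag$ and $ba^\dag$ as normalized generalized inverses of each other with hermitian products), so $(ba^\dag)^\dag=ab^\dag$ comes for free, and likewise $(a^\dag b)^\dag=b^\dag a$ from the computation in (ii). Moreover, the reduction as you state it is not justified: for $b^\dag$ to obey condition ($\mathcal P$) at $a^\dag$ you would need $\|a(b^\dag-a^\dag)\|<1$, which does not follow from $\|a^\dag(b-a)\|<1$ (the obvious estimate picks up a factor of order $\|a\|\,\|a^\dag\|$), so you would be assuming more than the hypothesis provides.

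Concerning the ``moreover'' clauses, your algebra is right and your suspicion points at the true state of affairs, but the paragraph you end with is not a proof of anything. Since $(ab^\dag)(ab^\dag)^\dag=(ab^\dag)^\dag(ab^\dag)=aa^\dag$, the element $ab^\dag$ commutes with its Moore--Penrose inverse, i.e.\ it is EP, and $(ab^\dag)(ab^\dag)^\dag-(ab^\dag)^\dag(ab^\dag)=0$ is not invertible in a (nontrivial) unital Banach algebra; the same happens for $b^\dag a$, $ba^\dag$ and $a^\dag b$. Hence no appeal to Theorem \ref{thm7}, nor any transport of the invertibility of $aa^\dag-a^\dag a$ through $b^\dag=(1+a^\dag(b-a))^{-1}a^\dag$, can yield ``$a\in\A_{co}^{EP}$ implies $ab^\dag\in\A_{co}^{EP}$'': already $b=a$ (which trivially obeys ($\mathcal P$) at $a$) with $a$ co-EP gives $ab^\dag=aa^\dag$, a hermitian idempotent equal to its own Moore--Penrose inverse, hence EP and not co-EP. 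For comparison, the paper's own proof is silent on these clauses---it establishes only the four displayed identities and declares (i) done---so the obstruction you ran into lies in the statement itself rather than in your computation. Still, ending with ``pin down which invertibility statement is meant and push the invertibility through'' leaves a gap in your write-up: you should either prove the clause (impossible as literally stated) or state explicitly that your identities show these four products are EP, so the co-EP transfer cannot hold.
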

\begin{proof} 
It is enough  to prove (i) and (ii). To prove (i) recall that
 by Theorem \ref{thm2.1}  $bb^\dag = aa^\dag$.
Hence
\begin{eqnarray}\label{2.16x}
(ab^\dag)(ba^\dag) = a(b^\dag b)a^\dag= a(a^\dag a)a^\dag = aa^\dag
\end{eqnarray}
and
\begin{eqnarray}\label{2.16xx}
(ba^\dag)(ab^\dag) = b(a^\dag a)b^\dag= b(b^\dag b)b^\dag = bb^\dag
= aa^\dag.
\end{eqnarray}

Further,
\begin{eqnarray}\label{2.16}
(ab^\dag)(ba^\dag)(ab^\dag) = aa^\dag(ab^\dag)=  (aa^\dag a)b^\dag = ab^\dag.
\end{eqnarray}
 Finally,
\begin{eqnarray}\label{2.17}
(ba^\dag)(ab^\dag)(ba^\dag) = (ba^\dag)aa^\dag=  b(a^\dag aa^\dag)=
ba^\dag.
\end{eqnarray} Now by \eqref{2.16x}, \eqref{2.16xx}, \eqref{2.16} and \eqref{2.17}, (i) holds. 
Statement (ii) can be proved similarly.
\end{proof}

\vskip.3truecm
\noindent Julio Ben\'{\i}tez\par
\noindent E-mail address: jbenitez@mat.upv.es\par
\vskip.3truecm
\noindent Enrico Boasso\par
\noindent E-mail address: enrico\_odisseo@yahoo.it \par
\vskip.3truecm
\noindent Vladimir Rako\v cevi\'c\par
\noindent E-mail address: vrakoc@ptt.rs\par
\end{document}